\newtheorem{theorem}{Theorem}[section]
\newtheorem{lemma}[theorem]{Lemma}
\newtheorem{proposition}[theorem]{Proposition}
\newtheorem{definition}[theorem]{Definition}
\newtheorem*{theorem*}{Theorem}
\newtheorem*{lemma*}{Lemma}
\newtheorem*{remark*}{Remark}
\newtheorem*{definition*}{Definition}
\newtheorem*{proposition*}{Proposition}
\newtheorem*{corollary*}{Corollary}
\numberwithin{equation}{section}
\newcommand{\real}{\mathbb{R}}
\def\qed{\,\unskip\kern 6pt \penalty 500
\raise -2pt\hbox{\vrule \vbox to8pt{\hrule width 6pt
\vfill\hrule}\vrule}\par}
\definecolor{darkblue}{rgb}{0.05, .05, .65}
\definecolor{darkgreen}{rgb}{0.1, .65, .1}
\definecolor{darkred}{rgb}{0.8,0,0}
\newcommand{\beqn}{\begin{equation}}
\newcommand{\eeqn}{\end{equation}}
\newcommand{\bear}{\begin{eqnarray}}
\newcommand{\eear}{\end{eqnarray}}
\newcommand{\bean}{\begin{eqnarray*}}
\newcommand{\eean}{\end{eqnarray*}}
\begin{document}

\title{\huge \bf Blow up profiles for a reaction-diffusion equation with critical weighted reaction}

\author{
\Large Razvan Gabriel Iagar\,\footnote{Departamento de Matem\'{a}tica
Aplicada, Ciencia e Ingenieria de los Materiales y Tecnologia
Electr\'onica, Universidad Rey Juan Carlos, M\'{o}stoles,
28933, Madrid, Spain, \textit{e-mail:}
razvan.iagar@urjc.es}, \footnote{Institute of Mathematics of the
Romanian Academy, P.O. Box 1-764, RO-014700, Bucharest, Romania.}
\\[4pt] \Large Ariel S\'{a}nchez,\footnote{Departamento de Matem\'{a}tica
Aplicada, Ciencia e Ingenieria de los Materiales y Tecnologia
Electr\'onica, Universidad Rey Juan Carlos, M\'{o}stoles,
28933, Madrid, Spain, \textit{e-mail:} ariel.sanchez@urjc.es}\\
[4pt] }
\date{}
\maketitle

\begin{abstract}
We classify the blow up self-similar profiles for the following reaction-diffusion equation with weighted reaction
$$
u_t=(u^m)_{xx} + |x|^{\sigma}u^m,
$$
posed for $(x,t)\in\real\times(0,T)$, with $m>1$ and $\sigma>0$. In strong contrast with the well-studied equation without the weight (that is $\sigma=0$), on the one hand we show that for $\sigma>0$ sufficiently small there exist \emph{multiple self-similar profiles with interface} at a finite point, more precisely, given any positive integer $k$, there exists $\delta_k>0$ such that for $\sigma\in(0,\delta_k)$, there are at least $k$ different blow up profiles with compact support and interface at a positive point. On the other hand, we also show that for $\sigma$ sufficiently large, the blow up self-similar profiles with interface \emph{cease to exist}. This unexpected balance between existence of multiple solutions and non-existence of any, when $\sigma>0$ increases, is due to the effect of the presence of the weight $|x|^{\sigma}$, whose influence is the main goal of our study. We also show that for any $\sigma>0$, there are no blow up profiles supported in the whole space, that is with $u(x,t)>0$ for any $x\in\real$ and $t\in(0,T)$.
\end{abstract}

\

\noindent {\bf AMS Subject Classification 2010:} 35B33, 35B40,
35K10, 35K67, 35Q79.

\smallskip

\noindent {\bf Keywords and phrases:} reaction-diffusion equations,
non-homogeneous reaction, blow up, self-similar solutions, phase
space analysis

\section{Introduction}

The aim of this paper is to contribute to the study of the finite time blow up phenomenon for the quasilinear reaction-diffusion equation with weighted reaction term
\begin{equation}\label{eq1}
u_t=(u^m)_{xx}+|x|^{\sigma}u^m, \quad u=u(x,t), \quad (x,t)\in\real\times(0,T),
\end{equation}
with $m>1$ and $\sigma>0$, where the notations $u_t$ and $u_{xx}$ in \eqref{eq1} indicate, as usual, partial derivatives with respect to the corresponding time or space variables. It is already well established (see for example the books \cite{S4, QS} for homogeneous reaction terms or the two recent papers of the authors \cite{IS1, IS2} for weighted reaction terms) that finite time blow up is expected to occur for solutions to \eqref{eq1}, that means, there exists $T\in(0,\infty)$ such that $u(t)\in L^{\infty}(\real)$ for any $t\in(0,T)$ but $u(T)\not\in L^{\infty}(\real)$. The time $T\in(0,\infty)$ satisfying this property is called the blow up time of the solution $u$. Here and in the sequel we will denote for simplicity by $u(t)$ the map $x\mapsto u(x,t)$ for a fixed time $t\geq0$.

The finite time blow up phenomenon for the reaction-diffusion equation
\begin{equation}\label{eq.hom.gen}
u_t=\Delta u^m + u^p,
\end{equation}
with $m>1$, $p>1$, has been well investigated by now at least in dimension $N=1$ (\cite{S4, GV97}), although in dimensions $N>1$ there remain several interesting open problems concerning the uniqueness of blow up self-similar profiles and with the convergence to them. In particular, when $m=p$ and in spatial dimension one, that is,
\begin{equation}\label{eq.hom}
u_t=(u^m)_{xx}+u^m,
\end{equation}
which corresponds to \eqref{eq1} for $\sigma=0$, it is shown that there exists a unique (up to translation with respect to the $x$ variable) blow up profile, which is explicit
\begin{equation}\label{sol.hom}
u(x,t)=(T-t)^{1/(m-1)}F(|x|), \quad F(\xi)=\left[\frac{2m}{(m-1)(m+1)}\cos^2\left(\frac{\pi\xi}{L_S}\right)\right]_+^{1/(m-1)},
\end{equation}
where $L_S=2\pi\sqrt{m}/(m-1)$. It is then proved that this explicit solution, whose support is fixed, is the asymptotic profile near blow up for general solutions and that general solutions to \eqref{eq.hom} present localization of the support \cite[Chapter 4, Sections 4 and 5]{S4}. Moreover, in the same chapter these results are generalized to any dimension $N>1$, where again existence and uniqueness of the blow up profile are established, the form of it being no longer explicit.

Concerning reaction-diffusion equations of the more general form
\begin{equation}\label{eq.nohom.gen}
u_t=\Delta u^m+|x|^{\sigma}u^p,
\end{equation}
a number of results have been established, mostly in the last three decades. A big amount of these previous results focus on studying when blow up in finite time occurs, that is, for which exponents $p>1$ and for which initial conditions $u_0(x)=u(x,0)$, $x\in\real^N$. Among them, we quote a series of works devoted to establishing the Fujita exponent and to studying the "life-span" of solutions (that is, understanding how the blow up time of a one-parameter family of solutions changes with respect to the parameter) for the semilinear case $m=1$, \cite{BL89, BK87, Pi97, Pi98}, where more general weights than the pure powers $|x|^{\sigma}$ are considered. More recently, Suzuki in \cite{Su02} extends the Fujita exponent to the quasilinear case $m>1$ and establishes sufficient conditions on the tails of the initial data $u_0(x)$ as $|x|\to\infty$ for the finite time blow up to occur, but restricting himself to the range of exponents $p>m$. Furthermore, Andreucci and Tedeev establish the blow up rate in \cite{AT05}, but again restricting the analysis to the range of exponents $m<p<m+N/2$ and $0<\sigma\leq N(p-m)/m$. Coming back to the semilinear case $m=1$, the possibility of $x=0$ to be a blow up point is studied in the series of papers \cite{GLS, GS11, GLS13}, and it is shown that in general, most solutions cannot blow up at $x=0$ (something formally expected due to the specific form of the weighted reaction), but the origin can be still a blow up point in some very specific cases. A different direction of study where interesting results were obtained was studying equations of the form
$$
u_t=\Delta u^m+a(x)u^p,
$$
where $a(x)$ is a compactly supported function, first in dimension $N=1$ \cite{FdPV06} and later in dimension $N>1$, see \cite{BZZ11, KWZ11, Liang12, FdP18}, where also the fast diffusion case $m<1$ is considered. In particular, it is proved in \cite{FdP18} that for $N\geq2$ and $p=m$, solutions present grow up instead of blow up. For the one-dimensional case, the work \cite{FdPV06} goes into a deeper study of the blow up phenomenon, establishing blow up rates, sets and profiles.

This is why, the main goal of the larger project started by the authors in their previous works \cite{IS1, IS2} is to understand how an unbounded weight on the reaction term (typically a pure positive power $|x|^{\sigma}$) affects the blow up behavior in all its aspects (profiles, rates, blow up points, asymptotic behavior near the blow up time). The unboundedness of the weights is highly relevant, as already shown in the quoted works, since the reaction becomes much stronger at points where $|x|$ is large. The authors devote \cite{IS1} to the study of the blow up profiles for the reaction exponent $p=1$, proving that the weight $|x|^{\sigma}$ leads to finite time blow up even in this case (while in the non-weighted equation solutions are always global). The second paper \cite{IS2} is dedicated to the study and classification of the blow up profiles for $m>1$ and $1<p<m$, and in both works we show that there is a strong influence of the "strength" of the weight: generically, there exists a $\sigma^*\in(0,\infty)$ critical such that, for $\sigma<\sigma_*$ blow up is global and the blow up profiles have a form, while for $\sigma>\sigma_*$ blow up occurs \emph{only at the space infinity} and the blow up profiles have a very different form with respect to the ones established when $\sigma>0$ was small enough. In the present paper we deal with the case when both exponents are equal, $m=p$, and we show that the results concerning blow up profiles strongly depart both with respect to the non-weighted case $\sigma=0$ (described in \cite[Chapter 4]{S4}) and to the case $1<p<m$ with $\sigma>0$ (described in \cite{IS2}). We explain below how, by introducing our main results.

\medskip

\noindent \textbf{Main results.} It is a well established fact by now that special solutions, usually in self-similar form, enclose very important information about the qualitative properties of solutions to diffusion equations and usually are "optimal" solutions both in a priori estimates for general solutions and as patterns that generic solutions approach asymptotically (either as $t\to\infty$ in the case of global solutions, or as $t\to T$ if finite time blow up occurs). Thus, in our case they are likely to be \emph{blow up profiles} for Eq. \eqref{eq1}. This is why we are strongly interested in finding and classifying all the self-similar blow up solutions to Eq. \eqref{eq1}, that is, solutions to Eq. \eqref{eq1} having the particular form
\begin{equation}\label{SSform}
u(x,t)=(T-t)^{-\alpha}f(\xi), \quad \xi=|x|(T-t)^{\beta},
\end{equation}
where $T\in(0,\infty)$ is the finite blow up time and $\alpha>0$, $\beta\in\real$ exponents to be determined. Replacing the form \eqref{SSform} in \eqref{eq1}, we readily find that $\alpha=1/(m-1)$, $\beta=0$ and the self-similar profile $f$ (which is in fact of separate variables since $\beta=0$) solves the non-autonomous differential equation
\begin{equation}\label{SSODE}
(f^m)''(\xi)-\frac{1}{m-1}f(\xi)+\xi^{\sigma}f^{m}(\xi)=0, \quad \xi\in[0,\infty).
\end{equation}
We perform in the sequel a deep study of the previous ODE. We thus introduce the type of profiles $f$ we look for, similar to \cite{IS2}.
\begin{definition}\label{def1}
We say that $f$ solution to \eqref{SSODE} is a \textbf{good profile}
if it fulfills one of the following two properties related to its
behavior at $\xi=0$:

\indent (P1) $f(0)=a>0$, $f'(0)=0$.

\indent (P2) $f(0)=0$, $(f^m)'(0)=0$.

A good profile $f$ is called a \textbf{good profile with interface}
at some point $\eta\in(0,\infty)$ if
$$
f(\eta)=0, \qquad (f^m)'(\eta)=0, \qquad f>0 \ {\rm on} \
(\eta-\delta,\eta), \ {\rm for \ some \ } \delta>0.
$$
\end{definition}
In our previous works \cite{IS1, IS2} we have proved that good profiles with interface exist for any $\sigma>0$, and conjectured that for any $\sigma>0$, there is a unique good profile (a conjecture we could not yet prove but which it is strongly supported by numerical experiments). Moreover, depending on how big is $\sigma>0$, good profiles may satisfy either assumption (P1) (if $\sigma>0$ is rather small) or assumption (P2) in Definition \ref{def1} (if $\sigma>0$ is large). Surprisingly, the results we state below, concerning our current exponents $m=p$, is in a \emph{striking contrast} to the above ones. We begin by our existence result, which occurs for $\sigma>0$ sufficiently small.
\begin{theorem}[Existence of multiple good profiles with interface for $\sigma$ small]\label{th.exist}
Given any positive integer $k$, there exists $\delta_k>0$ sufficiently small such that for any $\sigma\in(0,\delta_k)$, there exist \textbf{at least $k$ different} good profiles with interface to \eqref{SSODE}. All these profiles satisfy assumption (P1) in Definition \ref{def1}. There is no good profile with interface satisfying property (P2) in Definition \ref{def1}.
\end{theorem}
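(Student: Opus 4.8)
The plan is to analyze the ODE \eqref{SSODE} via a phase-plane / dynamical-systems approach, since the equation is autonomous-like except for the explicit weight $\xi^{\sigma}$. The key idea is a perturbation argument treating $\sigma>0$ as small: when $\sigma=0$ the profile equation reduces to the explicitly solvable one whose solution is \eqref{sol.hom}, a single compactly supported bell-shaped profile with interface. For small $\sigma>0$ the weight $\xi^{\sigma}$ is a singular perturbation (it vanishes at $\xi=0$ but grows at infinity), and I expect the multiplicity to emerge from oscillatory behavior induced by this perturbation as one shoots from the origin.

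First I would set up a shooting argument. Parametrize solutions satisfying (P1) by the initial height $f(0)=a>0$ (with $f'(0)=0$), so for each $a$ there is a unique local solution $f_a$. Define the shooting functional by tracking whether $f_a$ develops an interface (hits zero with zero flux $(f^m)'=0$) at a finite point, versus oscillating or blowing up. I would rewrite \eqref{SSODE} in terms of $g=f^m$ to regularize the degenerate diffusion, obtaining $g''=\frac{1}{m-1}g^{1/m}-\xi^{\sigma}g$, and then pass to a suitable phase plane (for instance via variables $X=g'/$(something) and $Y$ capturing $g$), rescaling $\xi$ to absorb the weight. The natural move is an Emden--Fowler-type change of variables that converts the $\xi^{\sigma}$ factor into an autonomous system with an extra "friction/antifriction" term proportional to $\sigma$; in the limit $\sigma\to0$ this friction vanishes and the system becomes conservative with a center, around which trajectories are closed periodic orbits.

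The heart of the multiplicity lies in counting intersections with a target manifold. In the conservative ($\sigma=0$) limit, the relevant trajectory corresponds to the explicit profile \eqref{sol.hom}, and I expect the rescaled flow to perform a definite number of rotations (half-turns) around the center before reaching the interface condition; each such half-rotation, when $\sigma>0$ is turned on, can be made to correspond to a distinct good profile with interface by an intermediate-value / continuity argument on the shooting parameter $a$. Concretely, as $\sigma\downarrow0$ the number of oscillations of $f_a$ over the relevant $\xi$-range grows without bound (because the effective "time" available before the antifriction destroys the orbit diverges), so for $\sigma<\delta_k$ one can trap at least $k$ parameter values $a_1,\dots,a_k$ each producing a solution meeting the interface condition with positivity on a left-neighborhood. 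That all these profiles satisfy (P1) and none satisfies (P2) should follow from a separate analysis of the behavior at $\xi=0$: I would show that a (P2)-type solution (starting at $f(0)=0$) cannot be positive immediately to the right while also later returning to an interface, by examining the local expansion near the origin where the term $\xi^{\sigma}f^m$ is negligible and the balance $\frac{1}{m-1}f=(f^m)''$ forces the unique behavior incompatible with a bounded good profile with interface.

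The main obstacle I anticipate is making the oscillation-counting rigorous and uniform: I must control the rescaled trajectories on a $\sigma$-dependent range and show both that (i) the number of half-rotations genuinely tends to infinity as $\sigma\to0$, and (ii) each rotation yields a \emph{transversal} crossing of the interface target set so that the intermediate-value argument produces distinct solutions rather than a single degenerate one. This requires careful estimates on the perturbation term $\xi^{\sigma}$ near the center of the limiting conservative system, where the explicit profile \eqref{sol.hom} and its period can be computed, together with a nondegeneracy (transversality) check that the Melnikov-type derivative of the shooting functional with respect to $a$ does not vanish at the candidate roots. Ruling out (P2) profiles I expect to be comparatively routine via the local analysis at the origin.
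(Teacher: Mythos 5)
Your overall picture (perturbing off the explicit $\sigma=0$ profile, oscillations around a center generating multiplicity) matches the phenomenology, but two of your steps would not survive as stated. The most concrete failure is your treatment of (P2): you claim that ruling out good profiles with interface satisfying $f(0)=0$, $(f^m)'(0)=0$ is ``comparatively routine via the local analysis at the origin.'' It is not, because local (P2) solutions genuinely exist: the paper's Lemma \ref{lem.P2} produces a unique orbit out of the critical point $P_2$ with $f(\xi)\sim \bigl[\tfrac{m-1}{2m(m+1)}\bigr]^{1/(m-1)}\xi^{2/(m-1)}$ as $\xi\to0$, which is positive immediately to the right of the origin and perfectly consistent with the balance $(f^m)''\sim\frac{1}{m-1}f$ you invoke. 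The obstruction to such a solution reaching an interface is \emph{global}, not local: the paper constructs the invariant cylinder $Y^2=\frac{2}{m+1}-\frac{1}{m}Z$ (the $\sigma=0$ separatrix), shows the flow crosses it strictly outward for $\sigma>0$ (Lemma \ref{lem.cyl}), that the orbit out of $P_2$ exits on the outside (Lemma \ref{lem.P0P2global}), and that every interface orbit must enter $P_1$ from the inside (Lemma \ref{lem.P1global}); hence no (P2) orbit can ever connect to an interface, and by elimination all interface orbits come from $Q_1$, i.e.\ $f(0)=a>0$ (Proposition \ref{prop.interf}). No expansion at $\xi=0$ alone can yield this conclusion.

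The second gap is that the engine of your multiplicity argument --- uniform oscillation counting for the forward-shot flow as $\sigma\to0$, plus a Melnikov-type transversality of the shooting functional in $a$ --- is exactly the part you leave open, and it is the hard part; note also that your intermediate-value setup needs care, since at a first zero of a positive solution one always has $(f^m)'\leq 0$, so the interface is a boundary case between ``crossing with negative flux'' and ``positive local minimum,'' not a sign change of a single functional. The paper avoids all nondegeneracy issues by a different mechanism: it shoots \emph{backward} from the interface (uniqueness of the profile with interface at each fixed $\xi_0$, inherited from \cite{IS2}), and for $\sigma=0$ exploits translation invariance of the explicit profile $F_0$ --- and, for multiplicity, a ``generalized profile'' obtained by \emph{concatenating $k$ copies of $F_0$} --- to produce, in the auxiliary system \eqref{PSsyst1}, interface orbits hitting the plane $\{z=0\}$ with slopes of both signs; a paraboloid trapping region around the hitting point shows the flow crosses it transversally, so by continuity in $\sigma$ these hitting points persist for $\sigma\in(0,\delta_k)$, and continuity of the slope in the interface position $\xi_0$ yields, by the intermediate value theorem, a good profile ($f'(0)=0$) with exactly $j$ local maxima for each $1\leq j\leq k$. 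This buys existence and distinctness (profiles are distinguished by their number of maxima) with only continuity, no transversality of a Melnikov derivative, and it crucially uses Proposition \ref{prop.interf} --- absent from your plan --- to know that the backward-shot profiles reach the axis with $f(0)>0$ without an intermediate zero. If you wish to salvage your forward-shooting route, you would in any case need the global cylinder/hyperbola structure both to exclude (P2) and to guarantee that trapped parameters produce true interfaces rather than globally positive solutions (cf.\ Theorem \ref{th.nontail}, whose proof requires the Hopf--fold normal form analysis at $P_3$).
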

Let us remark that, if for $1<p<m$ we were not able yet in \cite{IS2} to prove uniqueness of the good profiles (for $\sigma>0$ given), but all the numerical experiments suggest it, in the case under study $p=m$, we show that \emph{uniqueness does not hold true}! More precisely, we shall see that given $k>0$ and $\sigma\in(0,\delta_k)$ as in Theorem \ref{th.exist}, we will find $k$ different profiles, each of them having a different number of local maxima and minima and oscillating first a finite number of times around the explicit hyperbola obtained as the graph of the function
\begin{equation}\label{hyperbola}
f(\xi)=\left(\frac{1}{m-1}\right)^{1/(m-1)}\xi^{-\sigma/(m-1)},
\end{equation}
before at some finite point leaving the oscillation in order to decrease and reach their interface point. This is also contrasting to the case $\sigma=0$ where uniqueness of good profiles with interface is established, the only such profile being explicitely given by \eqref{sol.hom}. We plot in Figure \ref{fig1} a few good profiles with interface, as a visual representation of the multiplicity theorem. Let us notice that the profiles in Figure \ref{fig1} have one, two, three, four, respectively five local maxima.
\begin{figure}[ht!]
  \begin{center}
  \includegraphics[width=10cm,height=7.5cm]{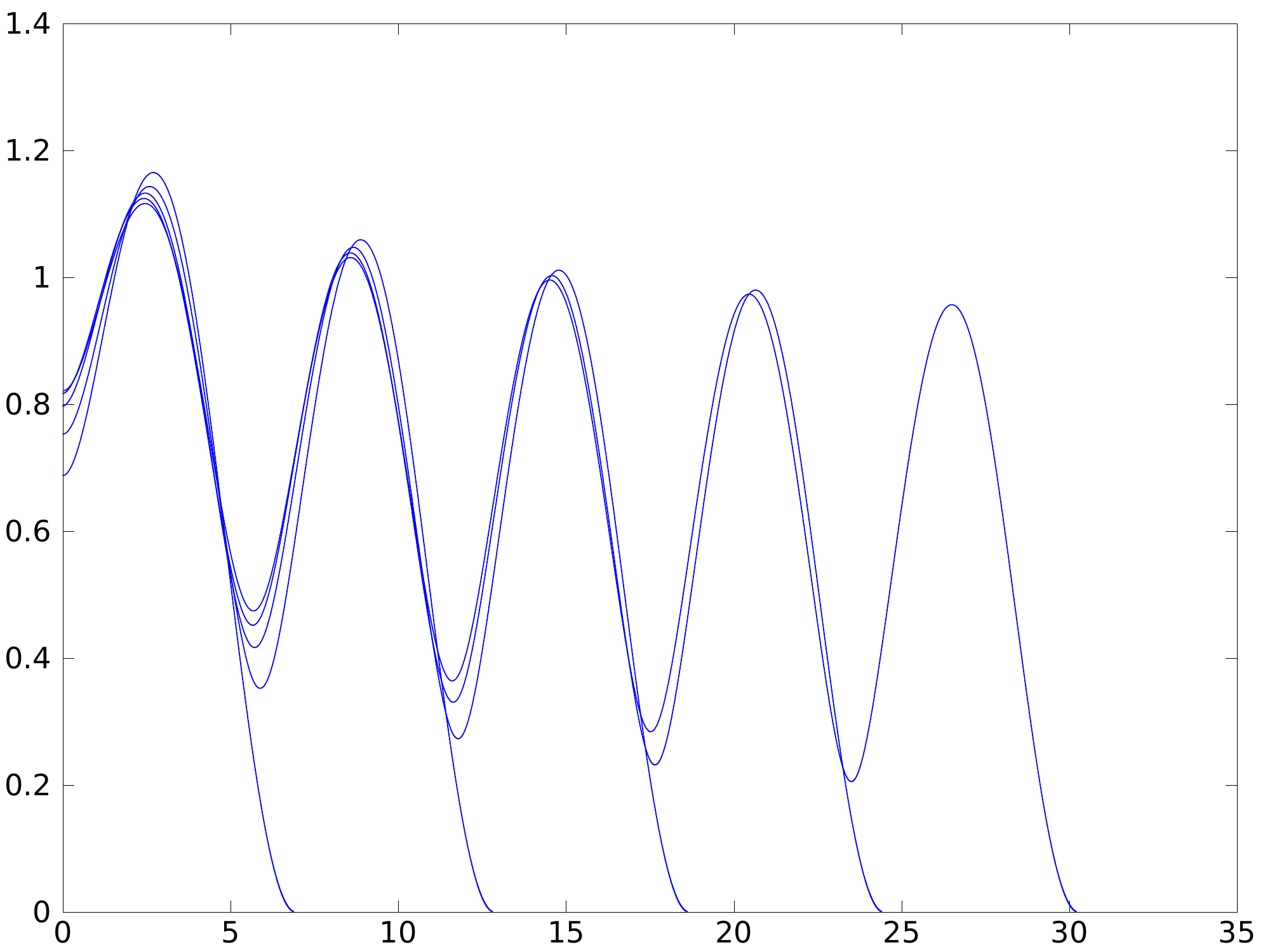}
  \end{center}
  \caption{Some good profiles with interface for $\sigma$ small. Experiment for $m=2$, $p=2$ and $\sigma=0.1$}\label{fig1}
\end{figure}

\medskip

On the other hand, since we are dealing with small values of $\sigma>0$, the fact that all these profiles behave like in the assumption (P1) as $\xi\to0$ may seem not so striking to the reader. However, the next result is strongly contrasting to what is known in the case $1<p<m$.
\begin{theorem}[Non-existence of good profiles with interface for $\sigma$ large]\label{th.nonexist}
There exists $\sigma_0>0$ such that for any $\sigma>\sigma_0$, there are \textbf{no good profiles with interface} to \eqref{SSODE}.
\end{theorem}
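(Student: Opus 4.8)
The plan is to build everything on a single monotone energy for \eqref{SSODE}. Multiplying the equation by $(f^m)'$ and rearranging, one is led to
\[
E(\xi):=\frac12\big((f^m)'(\xi)\big)^2-\frac{m}{(m-1)(m+1)}\,f(\xi)^{m+1}+\frac{\xi^{\sigma}}{2}\,f(\xi)^{2m},
\]
whose derivative along any solution is
\[
E'(\xi)=\frac{\sigma}{2}\,\xi^{\sigma-1}f(\xi)^{2m}\ge 0 .
\]
Thus $E$ is nondecreasing. At an interface point $\eta$ one has $f(\eta)=0$ and $(f^m)'(\eta)=0$, so $E(\eta)=0$, and monotonicity forces $E\le 0$ on $[0,\eta]$. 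Discarding the square term in $E\le 0$ yields the pointwise trapping bound
\[
f(\xi)\le \Big(\tfrac{2m}{(m-1)(m+1)}\Big)^{1/(m-1)}\xi^{-\sigma/(m-1)},\qquad \xi\in(0,\eta],
\]
so any good profile with interface stays below a fixed multiple of the hyperbola \eqref{hyperbola}. The same energy already disposes of alternative (P2): there $E(0)=0=E(\eta)$, hence $E\equiv0$ on $[0,\eta]$, so $E'\equiv0$ and $f\equiv0$, contradicting $f>0$ near $\eta$. This proves the last sentence of Theorem \ref{th.exist} for every $\sigma>0$ and reduces Theorem \ref{th.nonexist} to excluding profiles of type (P1) with interface when $\sigma$ is large.

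To capture the $\sigma$-dependence I would pass to a phase-plane formulation adapted to the hyperbola. Writing $s=\log\xi$ and
\[
X=(m-1)\,\xi^{\sigma}f^{m-1},\qquad Y=\xi\,\frac{(f^m)'}{f^m},
\]
the hyperbola \eqref{hyperbola} becomes the line $\{X=1\}$, and a short computation gives $\dot X=X\big(\sigma+\tfrac{m-1}{m}Y\big)$, where $\dot{}\,=d/ds$. The point where reaction exactly balances the linear term sits at $X=1$, $Y=-m\sigma/(m-1)$, and this ordinate recedes to $-\infty$ as $\sigma\to\infty$, which is precisely where the strength of the weight enters. Because on the hyperbola the diffusion term $(f^m)''$ is of lower order than the reaction and linear terms for large $\xi$ but dominates as $\xi\to0$, the resulting $(X,Y)$ system is not autonomous; I would close it either by adjoining a third variable proportional to $\xi^{-\sigma-2}$ or by working with the associated asymptotically autonomous system. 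In these variables a profile of type (P1) is the orbit emanating from $X\to0^+$ as $\xi\to0$ (since $X=(m-1)\xi^{\sigma}f(0)^{m-1}\to0$), while an interface corresponds to reaching $f=0$, $(f^m)'=0$ with the universal local exponent $f\sim(\eta-\xi)^{2/(m-1)}$, whose derivation is $\sigma$-independent.

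The mechanism I would then exploit is the following. For $\sigma$ small the orbit winds finitely many times around the balance point before descending to the interface, each additional winding producing one of the profiles of Theorem \ref{th.exist}, the energy gained per passage being controlled by $\sigma$ through $E'$. The claim to establish is that for $\sigma>\sigma_0$ this return becomes impossible: the large drift in $\dot X$ (equivalently, the large energy production once $\xi\gtrsim1$) pushes the trajectory into $\{X>1\}$ and prevents it from ever coming back to the configuration $f=0$, $(f^m)'=0$, so $f$ either stays positive up to $\xi=\infty$ --- a profile with full support, excluded separately since such profiles do not exist for any $\sigma>0$ --- or develops a sign in $(f^m)'$ incompatible with an interface. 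The hard part is to make this global and, crucially, \emph{uniform in the shooting parameter} $a=f(0)>0$. I do not expect the integral identities alone to suffice: alongside $E$ one can derive the Pohozaev-type relation
\[
\frac12\int_0^\eta\big((f^m)'\big)^2\,d\xi+\frac{\sigma+1}{2}\int_0^\eta \xi^{\sigma}f^{2m}\,d\xi=\frac{m}{(m-1)(m+1)}\int_0^\eta f^{m+1}\,d\xi ,
\]
together with $\tfrac{m}{(m-1)(m+1)}a^{m+1}=\tfrac{\sigma}{2}\int_0^\eta\xi^{\sigma-1}f^{2m}\,d\xi$ coming from $E$; but these are invariant under the natural scaling and match on both sides, so they do not by themselves force non-existence. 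Consequently the decisive and most delicate step is the qualitative control of the orbit in the phase plane, showing that the connection to the interface set is destroyed once $\sigma$ exceeds a threshold $\sigma_0$.
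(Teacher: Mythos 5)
Your preliminary material is sound and even elegant: the identity $E'(\xi)=\frac{\sigma}{2}\xi^{\sigma-1}f(\xi)^{2m}$ checks out, the trapping bound $f(\xi)\le\left(\frac{2m}{(m-1)(m+1)}\right)^{1/(m-1)}\xi^{-\sigma/(m-1)}$ on $[0,\eta]$ follows correctly, and the $E(0)=E(\eta)=0$ argument ruling out interfaces for profiles of type (P2) at \emph{every} $\sigma>0$ is a genuinely neat alternative to the paper's phase-space route (there this is a byproduct of the cylinder separatrix \eqref{cylinder} and Proposition \ref{prop.interf}). But none of this touches the actual content of Theorem \ref{th.nonexist}, and you say so yourself: the exclusion of (P1)-profiles with interface for $\sigma$ large is deferred to a ``mechanism I would exploit'' --- winding around the balance point $X=1$, $Y=-m\sigma/(m-1)$, large drift in $\dot X$ --- which is never substantiated. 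No lemma is proved showing that the return to the configuration $f=0$, $(f^m)'=0$ becomes impossible; the $(X,Y)$ system you introduce is non-autonomous and you only indicate two possible ways one might close it; and uniformity in the shooting parameter $a=f(0)$ is explicitly acknowledged as open. Your two integral identities are correct but, as you yourself observe, compatible with existence. In short, the proposal proves auxiliary facts plus the last sentence of Theorem \ref{th.exist}, but contains no step anywhere that quantitatively uses $\sigma>\sigma_0$; this is a program, not a proof.

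For comparison, the paper closes the gap with exactly the two ingredients you flag as missing, both obtained by elementary means. Uniformity in $a$ comes from the monotonicity Lemma \ref{lem.monot}: profiles ordered at $\xi=0$ stay ordered until both graphs cross the auxiliary hyperbola \eqref{hyp1}; its Step 1 is precisely your energy identity in integrated form, $2\left[E(\xi_0)-E(0)\right]=\sigma\int_0^{\xi_0}\xi^{\sigma-1}f^{2m}\,d\xi$, so your monotone $E$ could substitute there verbatim. This lemma reduces all (P1)-shots to the single orbit leaving $P_2$ (Lemma \ref{lem.P2}), which by the cylinder barrier (Lemma \ref{lem.P0P2global}) satisfies the lower bound \eqref{interm19} and hence first meets \eqref{hyp1} at some $\xi\le\xi_+$ with $\xi_+$ explicit and independent of $a$, see \eqref{interm20}. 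Shooting backward from the interface, the sign of the flow across the plane $\{Z=1/m\}$ --- positive where $(m-1)Y+\sigma X>0$ --- forces the last backward crossing of \eqref{hyp1} to occur where $X\le(m-1)h_0/\sigma$, i.e.\ at $\xi\ge\xi_-$ as in \eqref{interm21}; since $\xi_->\xi_+$ once $\sigma^2>8m^3/(2m+1)$, the forward and backward shots cannot match. So the decisive content of the theorem is carried by the cylinder barrier and the flow-sign computation on $\{Z=1/m\}$, which produce the two explicit, $a$-independent crossing bounds; your proposal would need to supply these (or substitutes for them) before it could be called a proof.
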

This is indeed a big difference with respect to the neighbor case when $p<m$, where we have seen that existence of good profiles with interface is granted for any $\sigma>0$. The deep reason behind this non-existence result is the fact that profiles satisfying assumption (P2) in Definition \ref{def1} will never present an interface behavior, while the profiles with an interface at some positive point will always intersect the vertical axis with negative slope (as it will result from the proof in Section \ref{sec.nonexist}). As a sample, we plot in Figure \ref{fig2} a few profiles with interface, noticing that they always satisfy $f'(0)<0$.
\begin{figure}[ht!]
  \begin{center}
  \includegraphics[width=10cm,height=7.5cm]{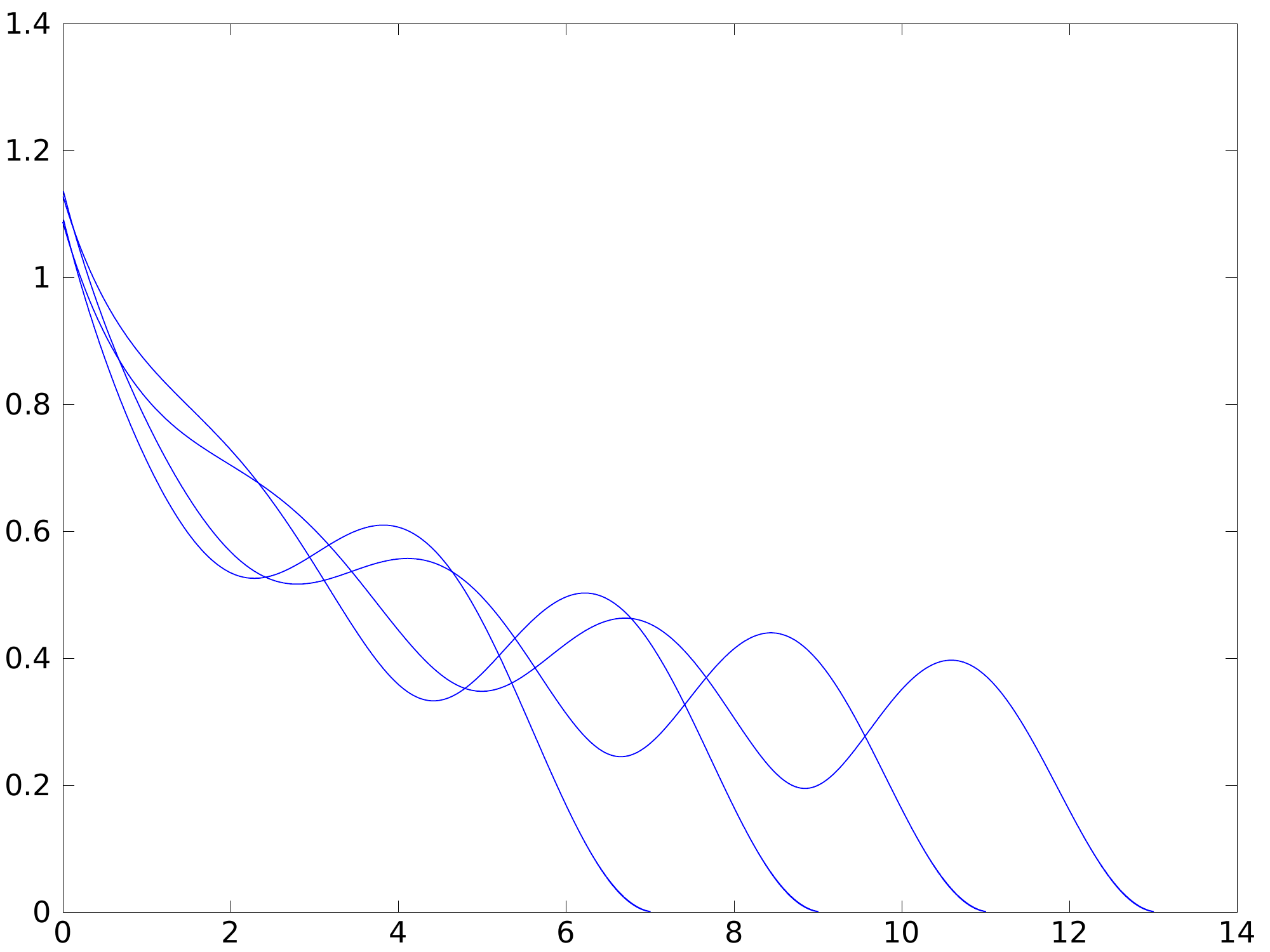}
  \end{center}
  \caption{A few profiles with interface and negative slope at the origin. Experiment for $m=2$, $p=2$ and $\sigma=0.5$} \label{fig2}
\end{figure}

\medskip

Moreover, another striking non-existence result occurs:
\begin{theorem}[Non-existence of positive good profiles]\label{th.nontail}
Given $\sigma>0$, there is no good profile $f$ to \eqref{SSODE} such that $f(\xi)>0$ for any $\xi>0$.
\end{theorem}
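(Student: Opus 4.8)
The plan is to build the argument on a monotone Pohozaev-type energy for \eqref{SSODE} together with a convexity comparison with the hyperbola \eqref{hyperbola}. Writing $q=(f^m)'$, I would introduce
$$
E(\xi)=\frac12\,q(\xi)^2-\frac{m}{(m-1)(m+1)}f(\xi)^{m+1}+\frac12\,\xi^{\sigma}f(\xi)^{2m},
$$
obtained by multiplying \eqref{SSODE} by $q$ and integrating once. Using \eqref{SSODE} to replace $q'$ one checks the key identity $E'(\xi)=\frac{\sigma}{2}\xi^{\sigma-1}f(\xi)^{2m}$, so that $E$ is nondecreasing, and strictly increasing wherever $f>0$. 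At the origin, a profile of type (P1) has $q(0)=0$, $f(0)=a>0$, hence $E(0)=-\frac{m}{(m-1)(m+1)}a^{m+1}<0$, whereas a profile of type (P2) has $E(0)=0$.

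Next I would pin down the behavior at infinity of a hypothetical positive profile. Setting $g=f^m$, equation \eqref{SSODE} reads $g''=f\big(\tfrac{1}{m-1}-\xi^{\sigma}f^{m-1}\big)$, so $g$ is concave above and convex below the hyperbola \eqref{hyperbola}. First I rule out that $f$ stays ultimately above \eqref{hyperbola}: there $g>0$ would be concave, forcing $g$ to be nondecreasing yet with $g''\to-\infty$, which is impossible. Hence $f$ must reach the hyperbola, and since the latter tends to $0$ I expect $f(\xi)\to0$. Linearizing \eqref{SSODE} around \eqref{hyperbola} in $\eta=\log\xi$, the perturbation solves to leading order an oscillator $(\delta g)''+\frac{m-1}{m}\xi^{\sigma}\delta g=(\text{forcing})$ with frequency growing like $\xi^{\sigma/2}$; crucially the homogeneous oscillatory modes have amplitude of strictly larger order than $g$ itself along \eqref{hyperbola}. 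Thus any profile carrying a nonzero oscillatory component must eventually undershoot $0$, i.e. develop an interface, and cannot remain positive (equivalently, the crossing speeds $|q|$ at successive intersections with \eqref{hyperbola} grow without bound and $E(\infty)=+\infty$). Consequently a genuinely positive profile must coincide with the unique non-oscillatory orbit $\Gamma_{+}$ that hugs \eqref{hyperbola} from below, and for it each term of $E$ vanishes, so $E(\infty)=0$.

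Case (P2) is then immediate: there $E(0)=0$ and $E$ is strictly increasing, so $E(\infty)>0$, contradicting $E(\infty)=0$. The delicate case is (P1), where $E(0)<0=E(\infty)$ is perfectly consistent, so the energy alone cannot conclude. Two facts constrain $\Gamma_{+}$ when traced backward from infinity: it cannot meet an interface at a finite $\xi_{*}$, since $f(\xi_{*})=q(\xi_{*})=0$ would give $E(\xi_{*})=0=E(\infty)$ and monotonicity of $E$ would force $f\equiv0$ on $[\xi_{*},\infty)$; and near $\xi=0$ every (P1) profile starts with $q'(0)=\frac{a}{m-1}>0$, hence increases while strictly below \eqref{hyperbola}, and since $g''>0$ there it cannot turn around before crossing \eqref{hyperbola}, after which it enters the oscillatory regime.

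The remaining and, I expect, hardest task is therefore to prove that this forced crossing is incompatible with lying on the non-oscillatory orbit $\Gamma_{+}$: in phase-plane terms, that the one-parameter family of (P1) trajectories is transverse to, and never lands on, the stable manifold of the critical point at infinity associated with the positive tail along \eqref{hyperbola}. Establishing this connection/transversality rigorously is the technical heart of the argument, and I would carry it out through the autonomous reformulation of \eqref{SSODE} used in the rest of the paper, describing the invariant manifolds of its critical points and using $E$ as a Lyapunov function to exclude the connection; the main obstacle is precisely controlling how the (P1) orbits leave the origin and traverse the oscillatory region well enough to certify that none of them asymptote to $\Gamma_{+}$.
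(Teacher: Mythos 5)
Your energy identity is correct: multiplying \eqref{SSODE} by $(f^m)'$ does give $E'(\xi)=\frac{\sigma}{2}\xi^{\sigma-1}f(\xi)^{2m}\geq 0$, and the values $E(0)<0$ for (P1) and $E(0)=0$ for (P2) are right. But the argument does not close, because everything funnels into two assertions you never establish. First, you claim a forever-positive profile must either undershoot zero (if it carries a ``nonzero oscillatory component'') or coincide with a unique non-oscillatory orbit $\Gamma_+$ asymptotic to the hyperbola \eqref{hyperbola}. The key statement that the homogeneous oscillatory modes have ``amplitude of strictly larger order than $g$ itself'' is asserted, not derived; making it rigorous would require a Liouville--Green/WKB amplitude estimate for a forced, non-autonomous oscillator together with control of the nonlinear terms, and without it you have not excluded perpetual bounded oscillation of $f$ around \eqref{hyperbola}. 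Moreover your disposal of (P2) uses $E(\infty)=0$, which you derive only from the asymptotics along the hypothetical $\Gamma_+$, so even that case is handled conditionally. Second, and as you yourself concede, case (P1) is left open: you reduce it to a transversality/non-landing statement for the (P1) trajectories relative to the stable manifold of the tail behavior, and defer its proof. That deferred statement is not a technical remainder --- it is the entire mathematical content of the theorem.

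The paper resolves precisely this crux, and in a way that makes your case distinction and your orbit $\Gamma_+$ moot. The tail along \eqref{hyperbola} corresponds to the non-hyperbolic critical point $P_3=(0,0,1)$ of the system \eqref{PSsyst2}, with eigenvalues $0$, $\pm i\sqrt{m-1}$ (a Hopf--fold configuration), and Lemma \ref{lem.P3} computes its Poincar\'e normal form, obtaining in cylindrical coordinates $\dot z=-\frac{\sigma+2}{2}z^2+O(|(z,r)|^3)$ and $\dot r=\frac{(3m+1)\sigma}{4(m-1)}zr+O(|(z,r)|^3)$ with $z=X$. Hence for $X>0$ the point repels in the radial direction, the trajectories follow hyperbola-like curves $z\sim Cr^{-(\sigma+2)(m-1)/2\sigma(3m+1)}$ that never reach the origin, and the only invariant sets entering $P_3$ are $\{X=0\}$ (which contains no profiles) and the line corresponding to \eqref{hyperbola}, which is not a solution for $\sigma>0$. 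In other words, the stable set of $P_3$ inside $\{X>0\}$ is empty: your hypothesized $\Gamma_+$ does not exist, which simultaneously settles (P1) and (P2) and also undercuts the conditional (P2) argument as you stated it. So the gap is genuine: the Lyapunov function $E$ by itself cannot rule out the (P1) configuration $E(0)<0\leq E(\infty)$, and the missing ingredient --- non-existence of \emph{any} orbit entering $P_3$ from $\{X>0\}$ --- is obtained in the paper by normal-form/bifurcation analysis, not by energy monotonicity.
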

We recall that for $\sigma=0$, there exists such a profile, which is in fact the constant profile $f(\xi)=(1/(m-1))^{1/(m-1)}$. The explanation behind the non-existence for $\sigma>0$ is that, as we will make it rigorous in the proofs, the constant profile is replaced by the function \eqref{hyperbola} and that is not a solution to \eqref{SSODE}. Moreover, such profiles with a tail as $\xi\to\infty$ do exist for any $\sigma>0$ if $1\leq p<m$, as shown in \cite{IS1, IS2}, thus, it was rather unexpected for us to discover that they cease to exist when $m=p$. The reasons for this difference will be detailed in Section \ref{sec.bifur}.


\medskip

\noindent \textbf{Techniques of the proofs and organization of the paper.} The proofs of the main results will be based (as we also did in \cite{IS1, IS2}) on the analysis of a phase-space associated to a three-dimensional autonomous system associated to Eq. \eqref{SSODE}. We stress here that the system we use in the present work is different from the ones used in the mentioned previous papers, and not just a particular case of them, thus the analysis of the critical points has to be done in detail again. As the main technique for the global analysis, we will employ again the \emph{backward shooting from an interface point} (see \cite{GP76, QV95} as older papers employing this specific technique, also used by the authors in \cite{IS1, IS2}), combined with the existence of a very convenient separatrix for the phase space in form of an infinite cylinder. The deduction of the system and the local analysis of the hyperbolic critical points is performed in Section \ref{sec.local}. We intentionally leave aside a very complicated non-hyperbolic critical point whose local analysis requires techniques of bifurcation theory and will be performed in Section \ref{sec.bifur}. The non-existence of any connection in the phase space entering that special point implies immediately the non-existence result in Theorem \ref{th.nontail}. The next Section \ref{sec.exist} is devoted to the proof of Theorem \ref{th.exist}; for this proof, using the original phase space is no longer sufficient, and the main technique is based by a clever use of the continuity with respect to the parameter $\sigma$, but in a different autonomous system. The most involved result of the paper, from the point of view of its proof, is the non-existence one. In order to prove Theorem \ref{th.nonexist}, one has to notice first a \emph{partial monotonicity of the profiles} with respect to the set of parameters $(f(0),f'(0))$, and then mix this ingredient with a direct shooting technique (from $x=0$) and the outcome of the global analysis of the phase space in order to show that for $\sigma>0$ large, a profile one shoots with $f(0)=a>0$ and $f'(0)=0$, and another profile one shoots backward from an interface point $\xi_0>0$, can never meet and join into a single one. All these facts are proved in Section \ref{sec.nonexist} which ends the paper.


\section{The phase space. Local analysis of the critical points}\label{sec.local}

This section is rather technical and is devoted to the local analysis of the hyperbolic critical points in a phase space associated to a quadratic dynamical system to which Eq. \eqref{SSODE} can be transformed. More precisely, let us set
\begin{equation}\label{PSchange}
\begin{split}
&X(\eta)=\sqrt{m(m-1)}\xi^{-1}f^{(m-1)/2}(\xi), \ \ Y(\eta)=\frac{2\sqrt{m(m-1)}}{m-1}(f^{(m-1)/2})'(\xi), \\ &Z(\eta)=(m-1)\xi^{\sigma}f^{m-1}(\xi),
\end{split}
\end{equation}
together with the change of independent variable given by
$$
\frac{d\eta}{d\xi}=\frac{1}{\sqrt{m(m-1)}}f^{-(m-1)/2}(\xi),
$$
to transform Eq. \eqref{SSODE} into the following quadratic autonomous system of differential equations
\begin{equation}\label{PSsyst2}
\left\{\begin{array}{ll}\dot{X}=\frac{m-1}{2}XY-X^2,\\
\dot{Y}=-\frac{m+1}{2}Y^2+1-Z,\\
\dot{Z}=Z[(m-1)Y+\sigma X],\end{array}\right.
\end{equation}
where the derivative is taken with respect to the new variable $\eta$. Let us notice that, although some of the equations (the ones for $X$ and $Z$ variables) are almost the same, the system in itself is sensibly different from the corresponding one we used in \cite[Section 2]{IS2} for the range $1<p<m$. To simplify the notation, let us also set $h_0=\sqrt{2/(m+1)}$.

\medskip

\noindent \textbf{Local analysis of the hyperbolic critical points in the plane.} We notice that the system \eqref{PSsyst2} has four critical points in the finite part of the phase plane, these are
$$
P_0=(0,h_0,0), \ \ P_1=(0,-h_0,0), \ \ P_2=\left(\frac{(m-1)h_0}{2},h_0,0\right), \ \ P_3=(0,0,1).
$$
We analyze the local behavior of the orbits in the phase space near these points below, with the exception of $P_3$, which is non-hyperbolic and whose analysis is postponed to the next section.
\begin{lemma}[Analysis of the points $P_0$ and $P_1$]\label{lem.P0P1}
The system in a neighborhood of the critical point $P_0$ has a two-dimensional unstable manifold and a one-dimensional stable manifold. The orbits going out of $P_0$ on the unstable manifold contain profiles such that
\begin{equation}\label{behP0}
f(\xi)\sim\left(\frac{(m-1)h_0}{2\sqrt{m(m-1)}}\xi-K\right)_+^{2/(m-1)}, \quad K>0, \ \ {\rm as} \ \xi\to\xi_0=\frac{2K\sqrt{m(m-1)}}{(m-1)h_0},
\end{equation}
that is, profiles that enter the positive region $f(\xi)>0$ with an interface at a finite point $\xi=\xi_0>0$. On the contrary, the system in a neighborhood of the critical point $P_1$ has a one-dimensional unstable manifold and a two-dimensional stable manifold. The orbits entering $P_1$ on the stable manifold contain profiles such that
\begin{equation}\label{behP1}
f(\xi)\sim\left(K-\frac{(m-1)h_0}{2\sqrt{m(m-1)}}\xi\right)_+^{2/(m-1)}, \quad K>0, \ \ {\rm as} \ \xi\to\xi_0=\frac{2K\sqrt{m(m-1)}}{(m-1)h_0},
\end{equation}
that is, profiles with an interface at a positive point $\xi=\xi_0>0$.
\end{lemma}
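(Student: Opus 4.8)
The plan is to linearize the system \eqref{PSsyst2} at each of the two critical points, read off the signs of the eigenvalues to obtain the dimensions of the invariant manifolds, and then translate the linearized dynamics back through the change of variables \eqref{PSchange} to recover the stated profile asymptotics.

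First I would compute the Jacobian of the right-hand side of \eqref{PSsyst2}, which at a generic point is
$$
\begin{pmatrix}
\frac{m-1}{2}Y-2X & \frac{m-1}{2}X & 0 \\
0 & -(m+1)Y & -1 \\
\sigma Z & (m-1)Z & (m-1)Y+\sigma X
\end{pmatrix}.
$$
Evaluating at $P_0=(0,h_0,0)$ the matrix becomes upper triangular (the only off-diagonal survivor being the $-1$ in the $Y$-row), so the eigenvalues are the diagonal entries $\frac{m-1}{2}h_0$, $-(m+1)h_0$ and $(m-1)h_0$. Two are positive and one is negative, giving a two-dimensional unstable manifold and a one-dimensional stable manifold at $P_0$. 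At $P_1=(0,-h_0,0)$ the same computation yields eigenvalues $-\frac{m-1}{2}h_0$, $(m+1)h_0$ and $-(m-1)h_0$, i.e. one positive and two negative, hence a one-dimensional unstable manifold and a two-dimensional stable manifold. This settles the dimension statements.

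The second part is to interpret the orbits on these manifolds as profiles. Setting $g=f^{(m-1)/2}$, the definitions \eqref{PSchange} read $X=\sqrt{m(m-1)}\,\xi^{-1}g$, $Y=\frac{2\sqrt{m(m-1)}}{m-1}g'$ and $Z=(m-1)\xi^{\sigma}g^{2}$, while $d\eta/d\xi=(m(m-1))^{-1/2}g^{-1}>0$. An interface at a finite $\xi_0>0$ corresponds exactly to $g\to0$ with $\xi$ bounded away from $0$, which forces $X\to0$ and $Z\to0$, so the orbit limits onto the plane where $X=Z=0$, i.e. onto $P_0$ or $P_1$. Near such a point $Y$ stays close to $\pm h_0$, which by the middle relation means $g'(\xi)\to\pm\frac{(m-1)h_0}{2\sqrt{m(m-1)}}$; integrating the resulting linear leading-order law $g(\xi)\sim\pm\frac{(m-1)h_0}{2\sqrt{m(m-1)}}(\xi-\xi_0)$ and raising to the power $2/(m-1)$ produces precisely \eqref{behP0} (for $Y\to h_0$, $g$ increasing, $f$ entering $f>0$ to the right of $\xi_0$) and \eqref{behP1} (for $Y\to-h_0$, $g$ decreasing, $f$ positive to the left of $\xi_0$), with the identification $K=\frac{(m-1)h_0}{2\sqrt{m(m-1)}}\xi_0$ matching the stated value of $\xi_0$. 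To confirm that $P_0$ is reached going \emph{out} along the unstable manifold and $P_1$ going \emph{in} along the stable manifold, I would note that since $g\sim|\xi-\xi_0|$ the integral $\eta(\xi)=(m(m-1))^{-1/2}\int g^{-1}\,d\xi$ diverges logarithmically at the interface: as $\xi\to\xi_0^{+}$ one has $\eta\to-\infty$ (outgoing from $P_0$), while as $\xi\to\xi_0^{-}$ one has $\eta\to+\infty$ (incoming to $P_1$).

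The delicate step is the last one: upgrading the heuristic ``$Y\approx\pm h_0$, hence $g$ is linear'' into a rigorous leading-order asymptotic for $f$. I would make this precise by working on the relevant invariant manifold, where $X$ and $Z$ decay exponentially in $\eta$ (controlled by the positive, resp. negative, eigenvalues), so that the $Y$-equation $\dot Y=-\frac{m+1}{2}Y^{2}+1-Z$ is a small perturbation of $\dot Y=-\frac{m+1}{2}(Y^{2}-h_0^{2})$ whose hyperbolic equilibria are exactly $\pm h_0$; a standard invariant-manifold and Gronwall argument then yields $Y\to\pm h_0$ at the appropriate exponential rate and justifies replacing $g'$ by its limit up to integrable corrections. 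As a consistency check one can also insert the ansatz $f\sim c(\xi-\xi_0)^{2/(m-1)}$ directly into \eqref{SSODE}: the dominant balance between $(f^{m})''$ and $\frac{1}{m-1}f$ forces the exponent $2/(m-1)$ and $c^{m-1}=\frac{m-1}{2m(m+1)}$, which coincides with $\left(\frac{(m-1)h_0}{2\sqrt{m(m-1)}}\right)^{2}$ after using $h_0^{2}=2/(m+1)$, confirming \eqref{behP0}--\eqref{behP1}.
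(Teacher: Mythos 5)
Your linearization, the eigenvalue count at $P_0$ and $P_1$, and the translation of $Y\to\pm h_0$ into $(f^{(m-1)/2})'(\xi)\to\pm\frac{(m-1)h_0}{2\sqrt{m(m-1)}}$ followed by integration are exactly the paper's argument, and your two consistency checks are sound extras: the dominant-balance computation does give $c^{m-1}=\frac{m-1}{2m(m+1)}=\bigl(\frac{(m-1)h_0}{2\sqrt{m(m-1)}}\bigr)^{2}$, and the logarithmic divergence of $\eta$ at the interface correctly matches outgoing orbits at $P_0$ with incoming ones at $P_1$.

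However, the one step you assert rather than prove is precisely the step the paper treats as the crux: that along an orbit on the two-dimensional unstable manifold of $P_0$ (resp.\ stable manifold of $P_1$), the variable $\xi$ tends to a \emph{finite, positive} limit $\xi_0$. Your sentence ``an interface at a finite $\xi_0>0$ corresponds exactly to $g\to0$ with $\xi$ bounded away from $0$'' argues only the converse implication (interface $\Rightarrow$ the orbit limits onto $P_0$ or $P_1$), which is not the direction the lemma needs; when you subsequently write $g(\xi)\sim\pm\,C(\xi-\xi_0)$ you have already presupposed $\xi\to\xi_0\in(0,\infty)$. The paper closes this with two explicit contradiction arguments: if $\xi\to\infty$ along the orbit, then $Z\to0$ forces $f\to0$ while $Y\to\pm h_0$ forces $(f^{(m-1)/2})'\to\pm K\neq0$, which is impossible; and if $\xi\to0$, then $X\to0$ gives $\xi^{-1}f^{(m-1)/2}(\xi)\to0$ while again $(f^{(m-1)/2})'\to\pm K\neq0$, impossible. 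Note that the machinery you invoke in your final paragraph would actually yield this directly, and arguably more cleanly than the paper: since $d\xi/d\eta=\sqrt{m(m-1)}\,g=X\xi$, one has $\frac{d}{d\eta}\ln\xi=X$, and the exponential decay of $X$ in $\eta$ on the relevant invariant manifold makes $X$ integrable at $\eta=-\infty$ (for $P_0$), resp.\ $\eta=+\infty$ (for $P_1$), so $\ln\xi$ converges and $\xi\to\xi_0\in(0,\infty)$. You should either draw this inference explicitly or reproduce the paper's exclusions of $\xi\to0$ and $\xi\to\infty$; as written, the finiteness and positivity of the interface point --- the content of the lemma beyond the eigenvalue computation --- is unjustified. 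A small cosmetic point: $\{X=Z=0\}$ is the $Y$-axis, a line, not a plane, and you should also note (as the paper does) that the one-dimensional manifolds corresponding to $\lambda_2$ lie inside that axis and hence carry no profiles.
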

\begin{proof}
The linearization of the system \eqref{PSsyst2} in a neighborhood of the critical points $P_0$, respectively $P_1$, has the matrix
$$
M=\left(
         \begin{array}{ccc}
           \pm(m-1)h_0/2 & 0 & 0 \\
           0 & \mp(m+1)h_0 & -1 \\
           0 & 0 & \pm(m-1)h_0 \\
         \end{array}
       \right),$$
with eigenvalues $\lambda_1=\pm(m-1)h_0/2$, $\lambda_2=\mp(m+1)h_0$, $\lambda_3=\pm(m-1)h_0$, where the plus sign corresponds to $P_0$ and the minus sign to $P_1$. Thus, $P_0$ has a two-dimensional unstable manifold, while $P_1$ has a two-dimensional stable manifold. It is easy to check that the orbits entering $P_0$ on the one-dimensional stable manifold, or going out of $P_1$ on the one-dimensional unstable manifold (both corresponding to the eigenvalue $\lambda_2$) are contained in the $Y$ axis. The orbits going out of $P_0$ (respectively entering $P_1$) on the two-dimensional unstable manifold (respectively the two-dimensional stable manifold) contain profiles such that $X\to0$, $Y\to\pm h_0$ and $Z\to0$. We deduce using \eqref{PSchange} that on the one hand this gives
\begin{equation}\label{interm22}
(f^{(m-1)/2})'(\xi)\sim\pm\frac{(m-1)h_0}{2\sqrt{m(m-1)}},
\end{equation}
and on the other hand shows that \eqref{interm22} holds for $\xi\to\xi_0>0$ finite. Indeed, assuming for contradiction that \eqref{interm22} holds true with $\xi\to\infty$, since $Z(\xi)\to0$, it follows that $f(\xi)\to0$ as $\xi\to\infty$, and on the contrary since $Y(\xi)\to\pm h_0$, this implies that $(f^{(m-1)/2})'(\xi)\to\pm K$ as $\xi\to\infty$ for some constant $K>0$, which is a contradiction. And if we assume for contradiction that \eqref{interm22} holds true with $\xi\to0$, since $X(\xi)\to 0$ it follows that $f^{(m-1)/2}(\xi)\to0$ as $\xi\to0$. Moreover, since $Y(\xi)\to\pm h_0$ it follows that $(f^{(m-1)/2})'(\xi)\to\pm K$ as $\xi\to0$, for some constant $K>0$. Letting $h(\xi)=f^{(m-1)/2}(\xi)$, we obtain that $h'(\xi)\to\pm K$ and $\xi^{-1}h(\xi)\to0$ as $\xi\to0$, which leads to a contradiction. We thus discard the possibility that $\xi\to0$ and $\xi\to\infty$ in \eqref{interm22}, and we then get the behavior \eqref{behP0} by integration in \eqref{interm22} when working with the plus sign, as $\xi\to\xi_0\in(0,\infty)$ (respectively \eqref{behP1} when working with the minus sign).
\end{proof}
\begin{lemma}[Analysis of the point $P_2$]\label{lem.P2}
The system in a neighborhood of the critical point $P_2$ has a two-dimensional stable manifold and a one-dimensional unstable manifold. The stable manifold is contained in the invariant plane $\{Z=0\}$. There exists a unique orbit going out of $P_2$, containing profiles such that
\begin{equation}\label{behP2}
f(0)=0, \quad f(\xi)\sim\left[\frac{m-1}{2m(m+1)}\right]^{1/(m-1)}\xi^{2/(m-1)}, \ \ {\rm as} \ \xi\to0.
\end{equation}
\end{lemma}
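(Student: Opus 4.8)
The plan is to determine the local structure at $P_2$ by linearizing \eqref{PSsyst2}, and then to transport the information carried by the unique unstable orbit back to the profile $f$ through the change of variables \eqref{PSchange}.

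First I would compute the Jacobian of \eqref{PSsyst2} at $P_2=\left(\frac{(m-1)h_0}{2},h_0,0\right)$. Because $\dot Z=Z[(m-1)Y+\sigma X]$ carries an overall factor $Z$, the partial derivatives of the third component with respect to $X$ and $Y$ vanish on $\{Z=0\}$, and moreover $\partial_Z\dot X=0$; hence the linearization is upper triangular,
\[
M=\begin{pmatrix} -\frac{(m-1)h_0}{2} & \frac{(m-1)^2 h_0}{4} & 0 \\[2pt] 0 & -(m+1)h_0 & -1 \\[2pt] 0 & 0 & \frac{(m-1)(2+\sigma)}{2}h_0 \end{pmatrix},
\]
so the eigenvalues are read off the diagonal: $\lambda_1=-\frac{(m-1)h_0}{2}<0$, $\lambda_2=-(m+1)h_0<0$ and $\lambda_3=\frac{(m-1)(2+\sigma)}{2}h_0>0$. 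This already yields the claimed two-dimensional stable and one-dimensional unstable manifolds.

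Next I would locate these manifolds. The eigenvectors for $\lambda_1$ and $\lambda_2$ have zero third component, since the third row and column of $M-\lambda_i I$ decouple, so the stable eigenspace lies in $\{Z=0\}$. As $\{Z=0\}$ is an invariant plane of the full nonlinear system on which $P_2$ is a stable node (both in-plane eigenvalues are negative), the local stable manifold must coincide with the two-dimensional stable set inside this invariant plane, which gives the containment in $\{Z=0\}$. By contrast, the $\lambda_3$-eigenvector has a nonzero $Z$-component, so exactly one of the two branches of the unstable manifold enters the physically relevant region $Z>0$; this is the unique orbit going out of $P_2$ referred to in the statement.

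Finally I would read off the profile behavior along this orbit. From \eqref{PSchange} one obtains the algebraic identity $Z=X^2\xi^{\sigma+2}/m$; since along the orbit $Z\to0$ while $X\to X_2=\frac{(m-1)h_0}{2}\neq0$, necessarily $\xi\to0$, because any finite or infinite limit of $\xi$ would force $Z$ to a nonzero or infinite limit. Using $f^{(m-1)/2}=X\xi/\sqrt{m(m-1)}$ then gives $f^{(m-1)/2}(\xi)\sim\frac{X_2}{\sqrt{m(m-1)}}\xi$ as $\xi\to0$, whence $f(0)=0$ and, raising to the power $2/(m-1)$ and simplifying the constant via $h_0=\sqrt{2/(m+1)}$, exactly the asymptotics \eqref{behP2}. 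The one point requiring a little care — the main, if modest, obstacle — is pinning down rigorously that the relevant limit is $\xi\to0$ and that it corresponds to $\eta\to-\infty$: I would confirm the latter from $d\eta/d\xi=f^{-(m-1)/2}/\sqrt{m(m-1)}\sim C/\xi$, which integrates to $\eta\sim C\log\xi\to-\infty$, so that approaching $P_2$ backward in $\eta$ indeed corresponds to $\xi\to0$, after which the leading-order term supplied by the unstable eigenvector controls $f$.
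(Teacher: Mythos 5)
Your proof is correct and takes essentially the same route as the paper: the same upper-triangular linearization at $P_2$ with eigenvalues $-\frac{(m-1)h_0}{2}$, $-(m+1)h_0$, $\frac{(m-1)(\sigma+2)h_0}{2}$, identification of the two-dimensional stable manifold inside the invariant plane $\{Z=0\}$ and of the single branch of the unstable manifold entering $\{Z>0\}$, and then transfer of the limits $X\to(m-1)h_0/2$, $Z\to0$ back to $f$ via \eqref{PSchange}, with the constant in \eqref{behP2} checking out. The only differences are detail-level improvements: your algebraic identity $Z=X^2\xi^{\sigma+2}/m$ pins down $\xi\to0$ more directly than the paper's appeal to ``an analysis similar to the one in Lemma \ref{lem.P0P1}'', and you verify explicitly the manifold facts that the paper delegates to \cite{IS1}.
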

\begin{proof}
The linearization of the system \eqref{PSsyst2} near the critical point $P_2$ has the matrix
$$
M(P_2)=\left(
         \begin{array}{ccc}
           -(m-1)h_0/2 & (m-1)^2h_0/4 & 0 \\
           0 & -(m+1)h_0 & -1 \\
           0 & 0 & (m-1)(\sigma+2)h_0/2 \\
         \end{array}
       \right),
$$
with eigenvalues $\lambda_1=-(m-1)h_0/2$, $\lambda_2=-(m+1)h_0$ and $\lambda_3=(m-1)(\sigma+2)h_0/2$. It is easy to verify (similarly as in \cite[Lemma 2.3]{IS1}) that the two-dimensional stable manifold is contained in the invariant plane $\{Z=0\}$ and there exists only one orbit going out of $P_2$ into the region $\{Z>0\}$ of the phase space. Taking into account that on this orbit we have $X\to(m-1)h_0/2$, $Y\to h_0$ and $Z\to0$, we deduce from \eqref{PSchange} that this unique orbit contain profiles such that
$$
\lim\limits_{\xi\to0}X(\xi)=\lim\limits_{\xi\to0}\sqrt{m(m-1)}f^{(m-1)/2}\xi^{-1}=\frac{m-1}{2}h_0,
$$
where the fact that $\xi\to0$ follows from an analysis similar to the one performed in the proof of Lemma \ref{lem.P0P1}. We readily infer the behavior given in \eqref{behP2}.
\end{proof}
Skipping for the moment the local analysis of the system \eqref{PSsyst2} near the remaining finite critical point $P_3$ (postponed to the next section), we are ready to study the critical points at space infinity of the system.

\medskip

\textbf{Local analysis of the critical points at infinity.} In order to study such points, we pass to the Poincar\'e hypersphere following the theory given for example in \cite[Section 3.10]{Pe}. We introduce the new variables $(\overline{X},\overline{Y},\overline{Z},W)$ such that
$$
X=\frac{\overline{X}}{W}, \ Y=\frac{\overline{Y}}{W}, \ Z=\frac{\overline{Z}}{W},
$$
and according to \cite[Theorem 4, Section 3.10]{Pe}, the critical points at space infinity of the phase space associated to the system \eqref{PSsyst2} lie on the equator of the Poincar\'e hypersphere, that is, at points of the form $(\overline{X},\overline{Y},\overline{Z},0)$, and where the following system is fulfilled:
\begin{equation}\label{Poincare1}
\left\{\begin{array}{ll}\overline{X}Q_2(\overline{X},\overline{Y},\overline{Z})-\overline{Y}P_2(\overline{X},\overline{Y},\overline{Z})=0,\\
\overline{X}R_2(\overline{X},\overline{Y},\overline{Z})-\overline{Z}P_2(\overline{X},\overline{Y},\overline{Z})=0,\\
\overline{Y}R_2(\overline{X},\overline{Y},\overline{Z})-\overline{Z}Q_2(\overline{X},\overline{Y},\overline{Z})=0,\end{array}\right.
\end{equation}
together with the obvious condition $\overline{X}^2+\overline{Y}^2+\overline{Z}^2=1$, where $P_2$, $Q_2$ and $R_2$ are the homogeneous second degree parts of the polynomials in the right hand side of the system \eqref{PSsyst2}, that is
\begin{equation*}
\begin{split}
&P_2(\overline{X},\overline{Y},\overline{Z})=\frac{m-1}{2}\overline{X}\overline{Y}-\overline{X}^2,\\
&Q_2(\overline{X},\overline{Y},\overline{Z})=-\frac{m+1}{2}\overline{Y}^2,\\
&R_2(\overline{X},\overline{Y},\overline{Z})=\overline{Z}((m-1)\overline{Y}+\sigma\overline{X}).
\end{split}
\end{equation*}
The system \eqref{Poincare1} thus becomes after straightforward calculations
\begin{equation}\label{Poincare2}
\left\{\begin{array}{ll}\overline{X}\overline{Y}(\overline{X}-m\overline{Y})=0,\\
\overline{X}\overline{Z}\left((\sigma+1)\overline{X}+\frac{m-1}{2}\overline{Y}\right)=0,\\
\overline{Y}\overline{Z}\left(\sigma\overline{X}+\frac{3m-1}{2}\overline{Y}\right)=0,\end{array}\right.
\end{equation}
and taking into account that we are only considering the quarter of the equator of the hypersphere where $\overline{X}\geq0$ and $\overline{Z}\geq0$, we find the following five critical points:
$$
Q_1=(1,0,0,0), \ \ Q_{2,3}=(0,\pm1,0,0), \ \ Q_4=(0,0,1,0), \ \
Q_5=\left(\frac{m}{\sqrt{1+m^2}},\frac{1}{\sqrt{1+m^2}},0,0\right).
$$
We analyze below the local behavior of the orbits in the phase space near each of these points, which is quite similar to the one in \cite[Section 2]{IS2}.
\begin{lemma}[Analysis of the point $Q_1$]\label{lem.Q1}
The critical point at infinity represented as $Q_1=(1,0,0,0)$ in the Poincar\'e hypersphere is an unstable node. The orbits going out of this point to the finite part of the phase space contain profiles $f(\xi)$ such that $f(0)=a>0$ with any possible behavior of the derivative $f'(0)$.
\end{lemma}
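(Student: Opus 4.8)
The plan is to desingularize the flow at the infinite critical point $Q_1$ by passing to the affine chart adapted to the dominant direction $\overline{X}=1$, and then to translate the behaviour of the orbits back into properties of the profiles via \eqref{PSchange}. Since near $Q_1$ the coordinate $X$ blows up while $Y,Z$ stay subordinate, I would introduce the local chart
\begin{equation*}
u=\frac{Y}{X},\qquad v=\frac{Z}{X},\qquad w=\frac{1}{X},
\end{equation*}
so that $Q_1$ becomes the origin $(u,v,w)=(0,0,0)$, and rescale the independent variable by $d\tau=X\,d\eta$ to cancel the common factor $X$ generated by the quadratic right-hand sides of \eqref{PSsyst2}. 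A direct computation then gives the desingularized system
\begin{equation*}
u'=u-mu^2+w^2-vw,\qquad v'=v\Big(\tfrac{m-1}{2}u+\sigma+1\Big),\qquad w'=w\Big(1-\tfrac{m-1}{2}u\Big),
\end{equation*}
with the prime denoting $d/d\tau$. Its linearization at the origin is the diagonal matrix $\mathrm{diag}(1,\,\sigma+1,\,1)$, whose three eigenvalues are strictly positive. Hence $Q_1$ is a hyperbolic source, that is, an unstable node; and since $X>0$ near $Q_1$, the rescaling $d\tau=X\,d\eta$ preserves the orientation of time, so the orbits genuinely leave $Q_1$ as $\eta$ increases.

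For the second assertion I would follow an orbit leaving $Q_1$, on which $X\to+\infty$ and $u,v,w\to0$, back through \eqref{PSchange}. First I must check that this forces $\xi\to0$, excluding both $\xi\to\infty$ and $\xi$ tending to a finite positive value, by the same elimination argument as in the proof of Lemma \ref{lem.P0P1}: if $\xi$ stayed away from $0$, then $X=\sqrt{m(m-1)}\,\xi^{-1}f^{(m-1)/2}\to\infty$ would force $f\to\infty$, incompatible with $Z=(m-1)\xi^{\sigma}f^{m-1}\to0$, and $\xi\to\infty$ is ruled out likewise. Once $\xi\to0$ is established, the relation $X=\sqrt{m(m-1)}\,\xi^{-1}f^{(m-1)/2}\to\infty$ together with $Z\to0$ (here $\sigma>0$ is essential) shows that $f^{(m-1)/2}$ tends to a positive constant, i.e.\ $f(0)=a>0$.

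Finally, to see that every value of $f'(0)$ is realized I would compute the direction along which such an orbit enters the node. Writing $f(0)=a>0$, $f'(0)=b$, and using $(f^{(m-1)/2})'=\tfrac{m-1}{2}f^{(m-3)/2}f'$, the definitions \eqref{PSchange} give, as $\xi\to0$,
\begin{equation*}
u=\frac{f'(\xi)}{f(\xi)}\,\xi\sim\frac{b}{a}\,\xi,\qquad w=\frac{\xi}{\sqrt{m(m-1)}\,f^{(m-1)/2}}\sim\frac{\xi}{\sqrt{m(m-1)}\,a^{(m-1)/2}},
\end{equation*}
while $v\sim\sqrt{(m-1)/m}\,a^{(m-1)/2}\,\xi^{\sigma+1}$ is of strictly higher order, so that $v/w\to0$. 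Thus these orbits enter $Q_1$ tangentially to the slow eigenplane (the one associated with the eigenvalue $1$), consistently with the node structure, and their slope within that plane satisfies
\begin{equation*}
\frac{u}{w}\longrightarrow \sqrt{m(m-1)}\,a^{(m-3)/2}\,b .
\end{equation*}
As $b=f'(0)$ ranges over all of $\real$ with $a$ fixed, this slope ranges over all of $\real$, so the orbits leaving $Q_1$ sweep out precisely the two-parameter family $(a,b)$ of profiles with $f(0)=a>0$ and arbitrary $f'(0)$. The step I expect to be the main obstacle is the bookkeeping of the desingularization, and in particular pinning down that $Q_1$ corresponds exactly to the degeneration $\xi\to0$ with $f(0)>0$ (rather than any other behaviour of $\xi$): this is what certifies that the trajectories at infinity are genuine profiles and what couples the departure direction faithfully to the free datum $f'(0)$.
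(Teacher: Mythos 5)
Your proposal is, in substance, the paper's own proof carried out by hand: the chart $(u,v,w)=(Y/X,Z/X,1/X)$ with the rescaling $d\tau=X\,d\eta$ is exactly the Perko chart that the paper invokes through \cite[Theorem 5, Section 3.10]{Pe}, and your desingularized system $u'=u-mu^2+w^2-vw$, $v'=v\bigl(\tfrac{m-1}{2}u+\sigma+1\bigr)$, $w'=w\bigl(1-\tfrac{m-1}{2}u\bigr)$ is the paper's system \eqref{systinf1} up to the sign convention; I have checked the computation and the eigenvalues $(1,\sigma+1,1)$, and your orientation argument ($X>0$ so $d\tau=X\,d\eta$ preserves time) is consistent with the paper's determination of the flow direction via $\dot X<0$. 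Your closing computation, showing that a profile with data $(a,b)$ enters $Q_1$ in backward time tangentially to the eigenvalue-$1$ plane with slope $u/w\to\sqrt{m(m-1)}\,a^{(m-3)/2}b$ sweeping all of $\real$, is a nice explicit version of the paper's step ``$y\sim Cw$ with $C$ arbitrary''.

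One step, however, is genuinely too quick as written: the claim that ``$X\to\infty$ together with $Z\to0$ shows that $f^{(m-1)/2}$ tends to a positive constant''. These two limits alone do not exclude $f\to0$: for instance $f(\xi)\sim c\,(\log(1/\xi))^{-1}$ gives $X\to\infty$, $Z\to0$, and even $u=\xi f'/f\to0$. What closes the gap is the node structure you have already set up: on the invariant plane $\{v=0\}$ the linearization is the identity (a star node), so every orbit entering the finite part $\{w>0\}$ leaves the origin with a definite tangent; along any orbit whose tangent has finite slope, $u/w=Y\to C\in\real$, whence $(f^{(m-1)/2})'(\xi)\to\tfrac{(m-1)C}{2\sqrt{m(m-1)}}$, integration gives $f^{(m-1)/2}(\xi)=\overline{C}+O(\xi)$, and $X\to\infty$ then forces $\overline{C}>0$, i.e.\ $f(0)=a>0$ --- this is precisely the paper's argument, and it shares with the paper the mild gloss concerning orbits tangent to the $u$-axis (formally the case $f'(0)=\pm\infty$, absorbed by the lemma's phrase ``any possible behavior of the derivative''). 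A smaller point of the same kind: your elimination of $\xi\not\to0$ uses $Z\to0$ before it is justified (your later verification of $v/w\to0$ presupposes $f(0)=a>0$, which would be circular); the non-circular justification is that $v'=(\sigma+1)v+\dots$ and $w'=w+\dots$ give $Z=v/w\sim C\ee^{\sigma\tau}\to0$ as $\tau\to-\infty$ on every orbit with $w>0$, orbits with $w\equiv0$ remaining at infinity. With these two repairs --- both available from material already in your write-up --- the proof is complete and matches the paper's.
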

\begin{proof}
The proof is rather similar to the one of the analogous result \cite[Lemma 2.5]{IS2}. We infer from part (a) of \cite[Theorem 5, Section 3.10]{Pe} that the flow of the system near $Q_1$ is topologically equivalent to the flow near the origin of the phase space in the system
\begin{equation}\label{systinf1}
\left\{\begin{array}{ll}-\dot{y}=-y+my^2+zw-w^2,\\
-\dot{z}=-(\sigma+1)z-\frac{m-1}{2}yz,\\
-\dot{w}=-w+\frac{m-1}{2}yw,\end{array}\right.
\end{equation}
where the minus sign has been chosen in the system \eqref{systinf1} in order to match the direction of the flow. This is noticed, for example, from the first equation of the original system \eqref{PSsyst2},
$$
\dot{X}=\frac{1}{2}X[(m-1)Y-2X],
$$
which gives $\dot{X}<0$ in a neighborhood of $Q_1$, taking into account that $|X/Y|\to+\infty$ near this point. It is immediate to see that the origin is an unstable node in the equivalent system \eqref{systinf1}. In order to establish the behavior of the profiles contained in the orbits going out of $Q_1$, we notice that in \eqref{systinf1} we have
$$
\frac{dy}{dw}\sim\frac{y}{w},
$$
or equivalently $y\sim Cw$, that is $Y/X\sim C/X$ or equivalently $Y\sim C$, where $C\in\real$ is an arbitrary constant. We infer by integration that
$$
f(\xi)\sim(C\xi+\overline{C})^{2/(m-1)}, \ {\rm as} \ \xi\to0, \quad \overline{C}>0, \ C\in\real,
$$
and the conclusion.
\end{proof}
\begin{lemma}[Analysis of the points $Q_2$ and $Q_3$]\label{lem.Q23}
The critical points at infinity represented as $Q_{2,3}=(0,\pm1,0,0)$ in the Poincar\'e hypersphere are an unstable node, respectively a stable node. The orbits going out of $Q_2$ to the finite part of the phase space contain profiles $f(\xi)$ such that there exists $\xi_0\in(0,\infty)$ with $f(\xi_0)=0$,
$f'(\xi_0)=+\infty$. The orbits entering the point $Q_3$ and coming from the finite part of the phase space contain profiles $f(\xi)$ such that there exists $\xi_0\in(0,\infty)$ with $f(\xi_0)=0$, $f'(\xi_0)=-\infty$.
\end{lemma}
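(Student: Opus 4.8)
The plan is to follow the template of Lemma~\ref{lem.Q1}, now working in the Poincar\'e chart adapted to the $\overline{Y}$-direction. Since $Q_{2,3}$ sit at $\overline{X}=\overline{Z}=0$, $\overline{Y}=\pm1$, I would pass to the coordinates $u=X/Y$, $v=Z/Y$, $w=1/Y$ together with the time rescaling $d\tau=Y\,d\eta$ (the standard positive factor that renders the chart system polynomial, cf. part (b) of \cite[Theorem 5, Section 3.10]{Pe}), for which the image of $Q_2$ is the origin. First I would substitute $X=u/w$, $Y=1/w$, $Z=v/w$ into \eqref{PSsyst2}, clear the common factor, and read off the linear part at the origin. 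A direct computation gives a diagonal linearisation with eigenvalues $m$ (in the $u$-direction), $(3m-1)/2$ (in the $v$-direction) and $(m+1)/2$ (in the $w$-direction); all three are positive for $m>1$, so the origin is a node. The orientation of the flow need not be tracked through the chart conventions: near $Q_2$ one has $Y\to+\infty$ and $\dot Y=-\tfrac{m+1}{2}Y^2+1-Z\sim-\tfrac{m+1}{2}Y^2<0$, so the flow leaves $Y=+\infty$ and $Q_2$ is an unstable node; near $Q_3$ one has $Y\to-\infty$ with the same sign $\dot Y<0$, hence the flow enters $Y=-\infty$ and $Q_3$ is a stable node (equivalently, by the antipodal reversal of the flow for a degree-two compactification).

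It remains to translate the node behaviour into profile behaviour through \eqref{PSchange}. Writing $g=f^{(m-1)/2}$, the orbits leaving $Q_2$ satisfy $Y\to+\infty$, that is $g'(\xi)\to+\infty$, while the smallest eigenvalue being the $w$-one forces the generic orbits to approach $Q_2$ tangentially to the $w$-axis, so that $X=u/w\to0$ and $Z=v/w\to0$. The key point is that $\xi$ stays finite and positive. Indeed, since $d\xi/d\eta$ is a positive multiple of $g=f^{(m-1)/2}\ge0$, the map $\eta\mapsto\xi(\eta)$ is nondecreasing; as $\eta\to-\infty$ (the source $Q_2$) it decreases to a limit $\xi_0=\inf\xi\in[0,\infty)$, which is automatically finite. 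If $f(\xi_0)>0$ then \eqref{SSODE} would be regular there and $g'$ bounded, contradicting $g'\to+\infty$; hence $f(\xi_0)=0$. And $\xi_0=0$ is impossible: with $X=\sqrt{m(m-1)}\,\xi^{-1}g\to0$ one would need $g(\xi)/\xi\to0$, whereas $g'\to+\infty$ forces the average $g(\xi)/\xi\to+\infty$. Finally, along these orbits the dominant balance in \eqref{SSODE} is $(f^m)''\approx0$, so $(f^m)'(\xi_0)\ne0$; since $f'=(f^m)'/(m f^{m-1})$ and $f(\xi_0)=0$, this yields $f'(\xi_0)=+\infty$. The statement for $Q_3$ follows symmetrically: now $g'\to-\infty$, $\eta\to+\infty$, and $\xi(\eta)$ increases to $\xi_0=\sup\xi$; here $\xi_0=+\infty$ is ruled out because a positive $g$ decreasing to $0$ as $\xi\to\infty$ has $g'$ integrable and therefore cannot tend to $-\infty$, giving $f(\xi_0)=0$ with $f'(\xi_0)=-\infty$.

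The step I expect to be most delicate is exactly this last translation: proving that $\xi_0$ is finite and strictly positive, and that the flux $(f^m)'(\xi_0)$ does not vanish. The nonvanishing of the flux is precisely what distinguishes these $Q_{2,3}$ orbits (a singular vertical contact with the axis) from the genuine interface orbits attached to $P_0$ and $P_1$ in Lemma~\ref{lem.P0P1}, where $(f^m)'(\xi_0)=0$. The contradiction arguments ruling out $\xi_0\in\{0,\infty\}$ must be carried out with care, in the spirit of the corresponding passage in the proof of Lemma~\ref{lem.P0P1}; the remaining non-generic orbits, tangent to the faster eigendirections, either stay on the equator $\{w=0\}$ (and hence carry no finite profile) or form a lower-dimensional family that does not affect the description of the bulk of the orbits asserted in the statement.
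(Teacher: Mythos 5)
Your chart computation and node classification follow the same route as the paper: the paper also reduces to the Perko chart of part (b) of Theorem 5, obtains the diagonal linearization with eigenvalues $m$, $(3m-1)/2$, $(m+1)/2$ (up to the orientation sign), and fixes the orientation exactly as you do, from $\dot{Y}=-\frac{m+1}{2}Y^2+1-Z<0$ near both points. Your arguments that $\xi_0$ is finite and strictly positive and that $f(\xi_0)=0$ are sound and in the spirit of the paper's reference back to Lemma \ref{lem.P0P1}; your hedge about non-generic orbits is also harmless, since the strong invariant manifold tangent to the two fast eigendirections is precisely the invariant equator $\{w=0\}$, so every orbit actually reaching the finite part is tangent to the slow $w$-direction, as you use.

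There is, however, a genuine gap at the step you yourself flag as delicate: the inference ``the dominant balance in \eqref{SSODE} is $(f^m)''\approx 0$, so $(f^m)'(\xi_0)\neq0$'' does not follow. Since $f\to0$ and $\xi_0$ is finite, $(f^m)''=\frac{1}{m-1}f-\xi^{\sigma}f^m\to0$ yields only that $(f^m)'$ has a \emph{finite} limit $L$ at $\xi_0$; it does not exclude $L=0$, which is exactly the interface contact \eqref{behP0}--\eqref{behP1} carried by the orbits of $P_0$ and $P_1$. And the conclusion $f'(\xi_0)=\pm\infty$ genuinely needs $L\neq0$: writing $f'=\frac{2}{m-1}f^{(3-m)/2}\left(f^{(m-1)/2}\right)'$, for $1<m\leq3$ this is an indeterminate product of $f^{(3-m)/2}\to0$ against $Y\to\pm\infty$, so $Y\to\pm\infty$ alone does not give a vertical contact. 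Your qualitative node input (mere tangency to the $w$-axis, i.e. $X\to0$, $Z\to0$) is too coarse to decide this. The paper closes precisely this step quantitatively: from the eigenvalue ratio it gets $\frac{dx}{dw}\sim\frac{2m}{m+1}\frac{x}{w}$, hence $X\sim C|Y|^{-(m-1)/(m+1)}$, and integrating via \eqref{PSchange} produces the explicit local form $f(\xi)\sim\left(C_1\xi^{2m/(m-1)}+C_2\right)^{1/m}$ with $C_1\neq0$, from which $f(\xi_0)=0$ and $f'(\xi_0)=\pm\infty$ are read off at once. Alternatively, you could repair your version without the rate: if $L=0$, then $h=f^m$ satisfies $h''=\frac{1}{m-1}h^{1/m}(1+o(1))>0$ near $\xi_0$ with $h(\xi_0)=h'(\xi_0)=0$; multiplying by $h'$ and integrating gives $h'\sim c\,h^{(m+1)/2m}$, hence $f\sim c'(\xi-\xi_0)^{2/(m-1)}$, so that $Y\propto\left(f^{(m-1)/2}\right)'$ stays bounded, contradicting $Y\to\pm\infty$. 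Some such quantitative argument must be inserted; as written, the nonvanishing of the flux is asserted, not proved.
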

\begin{proof}
We infer from part (b) of \cite[Theorem 5, Section 3.10]{Pe} that the flow of the system near the points $Q_2$ and $Q_3$ is topologically equivalent to the flow near the origin of the phase space in the system
\begin{equation}\label{systinf2}
\left\{\begin{array}{ll}\pm\dot{x}=-mx+x^2+xw^2-zw^2,\\
\pm\dot{z}=-\frac{3m-1}{2}z-\sigma xz-z^2+zw^2,\\
\pm\dot{w}=-\frac{m+1}{2}w-zw+w^3,\end{array}\right.
\end{equation}
where we have to choose the minus sign for one of the points and the plus sign for the other point. Similarly as in \cite[Lemma 2.6]{IS2}, we deduce from the second equation of the original system \eqref{PSsyst2}, that is
$$
\dot{Y}=-\frac{m+1}{2}Y^2+1-Z,
$$
that $\dot{Y}<0$ in a neighborhood of both points $Q_2$ and $Q_3$, which gives the direction of the flow (from right to left) and shows that we have to choose the minus sign in the system \eqref{systinf2} near $Q_2$ and the plus sign near $Q_3$. It follows that $Q_2$ is an unstable node and $Q_3$ is a stable node. To establish the local behavior, we proceed as in the end of the proof of Lemma 2.6 in our previous work \cite{IS2}, by noticing that
$$
\frac{dx}{dw}\sim\frac{2m}{m+1}\frac{x}{w},
$$
in a neighborhood of any of the two points, which implies that $x\sim C|w|^{2m/(m+1)}$ or equivalently, in the original variables,
$$
X\sim C|Y|^{-(m-1)/(m+1)}.
$$
Using \eqref{PSchange} and noticing that on the orbits going out of $Q_2$ (respectively entering $Q_3$) we have $Y\to\infty$ for $Q_2$ (respectively $Y\to-\infty$ for $Q_3$) and $X\to0$ for both points, we obtain by integration that
\begin{equation}\label{interm1}
f(\xi)\sim\left(C_1\xi^{2m/(m-1)}+C_2\right)^{1/m},
\end{equation}
and also that \eqref{interm1} occurs for $\xi\to\xi_0\in(0,\infty)$, since we easily discard the possibilities $\xi\to\infty$ and $\xi\to0$ with an analysis similar to the one performed in the proof of Lemma \ref{lem.P0P1}. We notice that for the orbits entering $Q_3$, $Y<0$ in a neighborhood of the point $Q_3$, which means $f'(\xi)<0$ and thus $C_1<0$ and $C_2>0$ in \eqref{interm1}. This shows that the profiles contained in orbits entering $Q_3$ have a change of sign at some point $\xi_0\in(0,\infty)$ with $f(\xi_0)=0$ and $f'(\xi_0)=-\infty$. On the contrary, for the orbits going out of $Q_2$, $Y>0$ in a neighborhood of $Q_2$, thus $f'(\xi)>0$ and $C_1>0$ and one has to choose $C_2<0$ in \eqref{interm1} to obtain profiles with a change of sign at some point $\xi_0\in(0,\infty)$ with $f(\xi_0)=0$ and $f'(\xi_0)=+\infty$, as stated.
\end{proof}
For the critical point $Q_4$, which is non-hyperbolic, we do not have to perform the local analysis near it using a dynamical system approach. We have
\begin{lemma}[No orbits connecting to $Q_4$]\label{lem.Q4}
There are no solutions to Eq. \eqref{SSODE} such that
$$
\lim\limits_{\xi\to\infty}\xi^{\sigma}f(\xi)^{m-1}=+\infty.
$$
In particular, there are no orbits entering the critical point $Q_4$ from the finite part of the phase space associated to the system \eqref{PSsyst2}.
\end{lemma}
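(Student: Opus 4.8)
The plan is to prove the (stronger) first assertion about the ODE \eqref{SSODE} directly, and then to read off the statement about $Q_4$ from the identity $Z=(m-1)\xi^{\sigma}f^{m-1}$ coming from \eqref{PSchange}. So I would argue by contradiction: suppose $f$ solves \eqref{SSODE} with $\lim_{\xi\to\infty}\xi^{\sigma}f(\xi)^{m-1}=+\infty$, equivalently $Z(\xi)\to+\infty$. Since $Z\geq0$ and $Z\to+\infty$, necessarily $f(\xi)>0$ for all large $\xi$, say $\xi\geq\xi_1$, so that $g:=f^{m}$ is positive and $C^2$ on $[\xi_1,\infty)$.

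The key computation is to rewrite \eqref{SSODE} for $g$. Using $(f^m)''=\tfrac{1}{m-1}f-\xi^{\sigma}f^{m}$ and $f=g^{1/m}$, I obtain
$$ g''=\frac{g^{1/m}}{m-1}-\xi^{\sigma}g=\frac{g^{1/m}}{m-1}\,(1-Z), $$
where I used $\xi^{\sigma}g=\tfrac{Z}{m-1}g^{1/m}$. The whole point is that the negative term dominates once $Z$ is large: enlarging $\xi_1$ so that $Z\geq2$ on $[\xi_1,\infty)$ gives $\xi^{\sigma}g\geq\tfrac{2}{m-1}g^{1/m}$ and hence
$$ g''(\xi)\leq-\frac{g(\xi)^{1/m}}{m-1}<0,\qquad \xi\geq\xi_1, $$
so $g$ is \emph{strictly concave} on $[\xi_1,\infty)$.

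Then I would close the argument in two steps. First, a strictly concave function that is bounded below (here by $0$) on a half-line must be nondecreasing: since $g''<0$, the derivative $g'$ is strictly decreasing, so if $g'(\xi_2)\leq0$ for some $\xi_2$ then $g'<0$ beyond $\xi_2$ and $g(\xi)\to-\infty$, contradicting $g>0$. Hence $g'>0$ and $g$ is increasing, so $g(\xi)\geq g(\xi_1)>0$. Second, the concavity bound then becomes \emph{uniform}, $g''\leq-\tfrac{g(\xi_1)^{1/m}}{m-1}=:-c_1<0$, which forces $g'(\xi)\leq g'(\xi_1)-c_1(\xi-\xi_1)\to-\infty$, contradicting $g'>0$. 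This contradiction establishes the first assertion.

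For the phase-space corollary, an orbit entering $Q_4=(0,0,1,0)$ is one along which $Z\to+\infty$ while $X,Y$ are negligible relative to $Z$; the concavity just derived also excludes $g\to+\infty$ at a \emph{finite} $\xi$ (a concave function cannot blow up to $+\infty$ in finite ``time''), so such a trajectory must have $\xi\to\infty$ and therefore yields a solution with $\lim_{\xi\to\infty}\xi^{\sigma}f^{m-1}=+\infty$, which has just been ruled out; thus no orbit enters $Q_4$. The only genuinely delicate point I anticipate is the passage from mere concavity — which by itself is compatible with a bounded-below positive function, e.g.\ $\sqrt{\xi}$ — to an actual contradiction; the resolution is precisely the two-step observation that positivity upgrades concavity to monotonicity, and monotonicity upgrades the concavity estimate to a uniform one incompatible with $g$ staying positive. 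Minor bookkeeping (that $f>0$ eventually, that the hypothesis is equivalent to $Z\to+\infty$, and the exclusion of finite-$\xi$ blow-up in the $Q_4$ step) should be handled carefully but presents no serious obstacle.
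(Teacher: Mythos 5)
Your proposal is correct, but it takes a genuinely different route from the paper: for this lemma the paper gives no argument at all, instead citing \cite[Lemma 2.8]{IS2}, where the first assertion is proved in a more general setting (covering $p=m$), and then reading off the non-existence of orbits into $Q_4$ just as you do. Your self-contained replacement is sound: writing $g=f^m$ turns \eqref{SSODE} into $g''=\frac{g^{1/m}}{m-1}(1-Z)$ with $Z=(m-1)\xi^{\sigma}f^{m-1}$, so the hypothesis $Z\to+\infty$ makes $g$ eventually strictly concave, and your two-step bootstrap is exactly what is needed to get past the pitfall you yourself flag (concavity alone is compatible with positive functions like $\sqrt{\xi}$): positivity of a strictly concave function on a half-line forces $g'>0$ (since $g'$ is strictly decreasing, a nonpositive value of $g'$ would drive $g$ linearly to $-\infty$), then monotonicity gives $g\geq g(\xi_1)>0$, which upgrades the concavity to the uniform bound $g''\leq-c_1<0$ and forces $g'\to-\infty$, a contradiction. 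Your handling of the phase-space corollary is, if anything, more careful than the paper's one-line deduction: the paper passes from ``no solutions with $Z(\xi)\to\infty$ as $\xi\to\infty$'' directly to ``no orbits entering $Q_4$'', whereas you explicitly dispose of the remaining scenario in which $Z\to\infty$ as $\xi\to\xi_*<\infty$ (there $\xi^{\sigma}$ stays bounded, so $g\to+\infty$ at $\xi_*$, impossible for an eventually concave function, which lies below its tangent lines on the bounded interval $[\xi_1,\xi_*)$). What the paper's approach buys is brevity and coherence with the framework of \cite{IS2}; what yours buys is a short, elementary, fully self-contained proof that spares the reader a trip to the earlier paper.
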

\begin{proof}
The first statement has been proved as \cite[Lemma 2.8]{IS2} in a more general case (including our case $p=m$). We thus deduce that there are no orbits in the finite part of the phase space such that $\lim\limits_{\xi\to\infty}Z(\xi)=+\infty$, consequently no orbits entering $Q_4$ coming from the interior of the Poincar\'e hypersphere, as stated.
\end{proof}
\begin{lemma}[Analysis of the point $Q_5$]\label{lem.Q5}
The critical point at infinity represented as $Q_5$ in the Poincar\'e hypersphere has a two-dimensional unstable manifold and a one-dimensional stable manifold. The orbits going out from this point into the finite region of the phase space contain profiles satisfying $f(0)=0$ and $f(\xi)\sim K\xi^{1/m}$ as $\xi\to0$ in a right-neighborhood of $\xi=0$.
\end{lemma}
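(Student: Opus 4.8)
The plan is to study $Q_5$ inside the \emph{same} chart of the Poincar\'e hypersphere already employed for $Q_1$, since both points satisfy $\overline{X}\neq0$. The first observation I would make is that, under the projection $y=Y/X$, $z=Z/X$, $w=1/X$ furnished by part (a) of \cite[Theorem 5, Section 3.10]{Pe}, the point $Q_5$ is \emph{not} mapped to the origin but to the interior point $(y,z,w)=(1/m,0,0)$: indeed, for $Q_5$ one has $\overline{Y}/\overline{X}=1/m$, while $\overline{Z}=0$ forces $z\to0$ and $w\to0$. Hence the local analysis of $Q_5$ reduces to linearizing the very same equivalent system \eqref{systinf1} at $(1/m,0,0)$ instead of at the origin.

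I would then compute this linearization directly. A short calculation shows that at $(1/m,0,0)$ the Jacobian of \eqref{systinf1} is diagonal, with eigenvalues
$$\mu_1=-1,\qquad \mu_2=\sigma+\frac{3m-1}{2m},\qquad \mu_3=\frac{m+1}{2m},$$
so that $\mu_1<0<\mu_2,\mu_3$. To fix the orientation of the flow, and thereby decide which manifold is stable and which is unstable, I would return to the original system \eqref{PSsyst2} and evaluate $\dot X=\tfrac{m-1}{2}XY-X^2$ along orbits approaching $Q_5$, where $X\sim mY$; this yields $\dot X\sim-\tfrac{m+1}{2m}X^2<0$, so $X$ decreases and orbits leave $Q_5$ towards the finite part of the phase space. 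This confirms that the global sign convention chosen in \eqref{systinf1} is also the correct one at $Q_5$, and consequently $Q_5$ possesses a two-dimensional unstable manifold and a one-dimensional stable manifold, as asserted.

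For the behavior of the profiles, I would use that the single stable direction is the $y$-direction, so that along the orbits leaving $Q_5$ into the finite region one has $y\to1/m$, that is $Y/X\to1/m$, equivalently $X\sim mY$. Substituting the definitions \eqref{PSchange} and setting $h=f^{(m-1)/2}$, this relation becomes the asymptotic ODE $h'/h\sim\frac{m-1}{2m}\,\xi^{-1}$, which integrates to $h\sim C\xi^{(m-1)/(2m)}$ and hence to $f\sim K\xi^{1/m}$. To justify that this asymptotics takes place exactly as $\xi\to0$ in a right-neighborhood of the origin, I would argue as in the proof of Lemma \ref{lem.P0P1}: since $X\to+\infty$ near $Q_5$ while $X\sim\sqrt{m(m-1)}\,C\,\xi^{-(m+1)/(2m)}$, the alternative $\xi\to\infty$ (which would force $X\to0$) and the alternative $\xi\to\xi_0\in(0,\infty)$ (which would force $X$ finite) are both excluded, leaving only $\xi\to0$, and therefore $f(0)=0$.

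The step I expect to be the main obstacle is not the linear algebra, which is clean because the Jacobian turns out diagonal, but the bookkeeping around the chart: one must recognize that $Q_5$ sits in the interior of the $\{\overline{X}\neq0\}$ chart, so that the same system \eqref{systinf1} applies but must be linearized away from the origin, and one must \emph{independently} certify the orientation of the flow at $Q_5$ through $\dot X$, since the overall sign in \eqref{systinf1} was only calibrated to reproduce the correct direction at $Q_1$ in Lemma \ref{lem.Q1}. Once these two points are handled, the eigenvalue computation and the integration producing $f\sim K\xi^{1/m}$ are routine.
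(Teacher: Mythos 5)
Your proposal is correct and follows essentially the same route as the paper: the same chart with the equivalent system \eqref{systinf1} linearized at the interior point $(1/m,0,0)$, the same flow-orientation check via $\dot X=\frac{m-1}{2}XY-X^2<0$ under $X\sim mY$, the same diagonal Jacobian (your $\mu_2=\sigma+\frac{3m-1}{2m}$ equals the paper's $\frac{2m(\sigma+1)+m-1}{2m}$), and the same integration of $X\sim mY$ yielding $f\sim K\xi^{1/m}$. Your only addition is the explicit self-consistency argument (using $X\to+\infty$ and $X\sim C\xi^{-(m+1)/2m}$) ruling out $\xi\to\infty$ and $\xi\to\xi_0\in(0,\infty)$, a detail the paper leaves implicit; this is a harmless refinement, not a different method.
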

\begin{proof}
The flow of the system in a neighborhood of $Q_5$ is again topologically equivalent to the flow of the system \eqref{systinf1} but in a neighborhood of the critical point $(y,z,w)=(1/m,0,0)$ in the notation of \eqref{systinf1}. Since $X\sim mY$ in a neighborhood of the point $Q_5$, we infer that
$$
\dot{X}=\frac{1}{2}X[(m-1)Y-2X]\sim-\frac{m(m+1)}{2}Y^2<0,
$$
thus we have to choose again the minus sign in front of the derivatives in the system \eqref{systinf1}. The linearization of the system \eqref{systinf1} near $(1/m,0,0)$ has the matrix
$$
M(Q_5)=\left(
         \begin{array}{ccc}
           -1 & 0 & 0 \\
           0 & \frac{2m(\sigma+1)+m-1}{2m} & 0 \\
           0 & 0 & \frac{m+1}{2m} \\
         \end{array}
       \right),
$$
with two positive eigenvalues and a negative one. It is obvious that the orbits going out of $Q_5$ on the two-dimensional unstable manifold enter the finite part of the phase space. In order to establish its local behavior near $Q_5$, we start from the fact that $X\sim mY$ in a neighborhood of $Q_5$, that is
$$
\xi^{-1}f(\xi)^{(m-1)/2}\sim\frac{2m}{m-1}\left(f^{(m-1)/2}\right)'(\xi), \quad {\rm as} \ \xi\to0,
$$
and after integration
$$
f^{(m-1)/2}(\xi)\sim C\xi^{(m-1)/2m}, \quad {\rm as} \ \xi\to0,
$$
where $C>0$ is an arbitrary positive constant, and the conclusion follows.
\end{proof}

\section{No positive blow up profiles. Proof of Theorem \ref{th.nontail}}\label{sec.bifur}

In this section we perform the local analysis of the phase space associated to the system \eqref{PSsyst2} in a neighborhood of the critical point $P_3=(0,0,1)$. Let us notice first that the linearization of the system in a neighborhood of $P_3$ has the matrix
$$
M(P_3)=\left(
         \begin{array}{ccc}
           0 & 0 & 0 \\
           0 & 0 & -1 \\
           \sigma & m-1 & 0 \\
         \end{array}
       \right),
$$
with three eigenvalues with zero real part: $\lambda_1=0$, $\lambda_2=i\sqrt{m-1}$, $\lambda_3=-i\sqrt{m-1}$. Thus, this point is a non-hyperbolic critical point and in order to analyze the system in a neighborhood of it we have to use more involved techniques specific to the bifurcation theory, more precisely by deducing the \emph{normal form} of the system in a neighborhood of the point using a result which is typical for the \emph{Hopf-fold bifurcations}. We prove the following
\begin{lemma}\label{lem.P3}
There is no orbit entering the critical point $P_3$ coming from the region $\{X>0\}$ of the phase space.
\end{lemma}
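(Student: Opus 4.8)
The plan is to treat $P_3$ as a \emph{fold--Hopf} (zero--Hopf) singularity — its linearization $M(P_3)$ has eigenvalues $0$ and $\pm i\sqrt{m-1}$ — reduce the system to normal form, and forbid convergence to $P_3$ from $\{X>0\}$ by an amplitude--growth argument. First I would translate the point to the origin via $w=Z-1$ and pass to coordinates adapted to the spectral splitting. The kernel of $M(P_3)$ is spanned by $(m-1,-\sigma,0)$ in the $(X,Y,w)$ variables, while the rotation plane of the pair $\pm i\sqrt{m-1}$ is $\{X=0\}$. This dictates the linear change
\begin{equation*}
u=\frac{X}{m-1},\qquad v_1=Y+\frac{\sigma}{m-1}X,\qquad v_2=Z-1,
\end{equation*}
after which $u$ is the central variable and $(v_1,v_2)$ carry the rotation. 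A direct computation gives the exact identities $(m-1)Y+\sigma X=(m-1)v_1$ and
\begin{equation*}
\dot u=\frac{m-1}{2}\,u\bigl[v_1-(2+\sigma)u\bigr],\qquad \dot v_2=(m-1)v_1(1+v_2),
\end{equation*}
so $\{u=0\}=\{X=0\}$ is invariant (as already used for the other critical points) and $\{X>0\}$ is the invariant half--space $\{u>0\}$. The whole question reduces to the flow on $\{u>0\}$ near the origin.

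Next I would put the rotation in standard form by rescaling $v_2=\sqrt{m-1}\,s$, so that $(v_1,s)$ rotate at frequency $\omega=\sqrt{m-1}$, and introduce the amplitude $r^2=v_1^2+s^2$ together with the fast phase $\theta$. The core is the computation of the averaged (normal form) equations for the slow variables $u$ and $r$. Averaging the exact expression for $\tfrac12\,d(r^2)/d\eta$ over one turn of $\theta$ annihilates every cubic term and every term odd in the oscillating coordinates, leaving only the contribution of the quadratic cross term $\tfrac{(3m+1)\sigma}{2}u\,v_1^2$; likewise the cross term in $\dot u$ averages out. One is led to
\begin{equation*}
\frac{d}{d\eta}\ln u=-\frac{(m-1)(2+\sigma)}{2}\,u+\cdots,\qquad
\frac{d}{d\eta}\ln r^2=\frac{(3m+1)\sigma}{2}\,u+\cdots,
\end{equation*}
the dots being higher order. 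I would emphasize that the coefficient $\tfrac{(3m+1)\sigma}{2}$ is \emph{strictly positive} precisely because $\sigma>0$; this is exactly what separates our situation from $\sigma=0$, where that coefficient vanishes and a connection to $P_3$ (the constant profile) survives.

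From here the conclusion is immediate. On $\{u>0\}$ the first equation forces $u$ to decrease, hence to stay positive, while the second forces $\ln r^2$ to increase; thus $r$ cannot tend to $0$. Equivalently, dividing the two averaged equations shows that
\begin{equation*}
r^{2}\,u^{\,\kappa}=\mathrm{const},\qquad \kappa=\frac{(3m+1)\sigma}{(m-1)(2+\sigma)}>0,
\end{equation*}
is conserved by the averaged flow, so any orbit with $u\to0$ must have $r\to\infty$. Since reaching $P_3$ requires $(u,r)\to(0,0)$, no orbit of $\{u>0\}=\{X>0\}$ can enter $P_3$, which is the assertion of the Lemma; combined with Lemma \ref{lem.Q4} it yields Theorem \ref{th.nontail}.

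The hard part will be to upgrade the averaged (heuristic) statement into a rigorous one, i.e. to show that $r$ genuinely increases near $P_3$ and not merely on average. I would do this through a near--identity normal form transformation of fold--Hopf type: a change $r^2\mapsto R^2=r^2+O(r^3+ur^2)$ removing the oscillatory quadratic terms and yielding $\tfrac{d}{d\eta}\ln R^2=\tfrac{(3m+1)\sigma}{2}u+O(|u|^2+R^2)$, after which the sign analysis applies verbatim in a small enough neighborhood of $P_3$. The delicate points are the control of the oscillatory integrals (integration by parts against the non--resonant phase $\theta$, using $\omega=\sqrt{m-1}\neq0$) and the verification that the conserved quantity of the truncated normal form persists, up to a controlled error, for the full system.
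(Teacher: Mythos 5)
Your proposal is correct and follows essentially the same route as the paper: the same fold--Hopf reduction at $P_3$, with your averaged amplitude equations coinciding exactly (after the rescaling $u=X/(m-1)$) with the paper's Poincar\'e normal form in cylindrical coordinates --- including the coefficients $-\tfrac{\sigma+2}{2}$ and $\tfrac{(3m+1)\sigma}{4(m-1)}$, whose positivity for $\sigma>0$ is the crux in both arguments --- and your first integral $r^2u^{\kappa}=\mathrm{const}$ reproducing the paper's family of hyperbolas avoiding the origin, while your proposed near-identity transformation is precisely the step the paper carries out via Kuznetsov's normal-form lemma. The one point the paper makes explicit that you leave implicit is the exclusion of a connection along the axis $\{r=0\}$ itself: in the original variables this set is the curve \eqref{hyp}, which is not a solution of \eqref{SSODE} for $\sigma>0$, so no orbit lies there.
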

\begin{proof}
The proof is rather technical and involved and will be divided into several steps.

\medskip

\noindent \textbf{Step 1.} Following the recipe in \cite[Section 3.1F, p.331]{Wig}, we perform a change of variable by letting
\begin{equation}\label{interm2}
v=(m-1)Y+\sigma X, \quad u=\sqrt{m-1}(Z-1), \quad z=X
\end{equation}
and after straightforward calculations, the system \eqref{PSsyst2} is transformed into the new system
\begin{equation}\label{interm3}
\left\{\begin{array}{ll}\dot{v}=-\sqrt{m-1}u+\frac{(3m+1)\sigma}{2(m-1)}zv-\frac{\sigma(m\sigma+m-1)}{m-1}z^2-\frac{m+1}{2(m-1)}v^2,\\
\dot{u}=\sqrt{m-1}v+uv,\\
\dot{z}=-\frac{\sigma+2}{2}z^2+\frac{1}{2}zv.\end{array}\right.
\end{equation}
We next follow the recipe given in \cite[Section 8.5]{KuBook} in order to establish the first terms of the normal form of the system \eqref{interm3}. To this end, we perform one more change of variable by letting $w=v+iu$, or equivalently
$$
v=\frac{w+\overline{w}}{2}, \quad u=\frac{w-\overline{w}}{2i}.
$$
We calculate $\dot{w}=\dot{v}+i\dot{u}$ and, using the equations in \eqref{interm3} and transforming $(v,u)$ into $(w,\overline{w})$, we find
\begin{equation}\label{interm4}
\begin{split}
\dot{w}&=i\sqrt{m-1}w-\frac{\sigma(m\sigma+m-1)}{m-1}z^2+\left(\frac{1}{4}-\frac{m+1}{8(m-1)}\right)w^2\\
&-\left(\frac{1}{4}+\frac{m+1}{8(m-1)}\right)\overline{w}^2+\frac{(3m+1)\sigma}{4(m-1)}(zw+z\overline{w})-\frac{m+1}{4(m-1)}w\overline{w},
\end{split}
\end{equation}
and
\begin{equation}\label{interm5}
\dot{z}=-\frac{\sigma+2}{2}z^2+\frac{1}{4}zw+\frac{1}{4}z\overline{w}.
\end{equation}

\medskip

\noindent \textbf{Step 2. Obtaining the normal form.} We already put the system \eqref{PSsyst2} in the form used in \cite[Section 8.5]{KuBook}, that is the system formed by the equations \eqref{interm4} and \eqref{interm5}, whose nonlinear parts are denoted $g(z,w,\overline{w})$ (for the equation \eqref{interm5}), respectively $h(z,w,\overline{w})$ (for the equation \eqref{interm4}). The Taylor expansions of $g$ and $h$ have, accordingly with the notation on \cite[p. 332-333]{KuBook}, the forms
$$
g(z,w,\overline{w})=\sum\limits_{j+k+l\geq2}\frac{1}{j!k!l!}g_{jkl}z^jw^k\overline{w}^l, \quad h(z,w,\overline{w})=\sum\limits_{j+k+l\geq2}\frac{1}{j!k!l!}h_{jkl}z^jw^k\overline{w}^l,
$$
whose coefficients in our case read
\begin{equation}\label{interm6}
g_{200}=-(\sigma+2), \ g_{110}=g_{101}=\frac{1}{4}, \ g_{020}=g_{002}=g_{011}=0,
\end{equation}
and (using the notation with $h$ for the $w$ equation)
\begin{equation}\label{interm7}
\begin{split}
&h_{200}=-\frac{2\sigma(m\sigma+m-1)}{m-1}, \ h_{020}=\frac{1}{2}-\frac{m+1}{4(m-1)}, \ h_{002}=-\left(\frac{1}{2}+\frac{m+1}{4(m-1)}\right),\\
&h_{110}=h_{101}=\frac{(3m+1)\sigma}{4(m-1)}, \ h_{011}=-\frac{m+1}{4(m-1)}.
\end{split}
\end{equation}
We now use \cite[Lemma 8.9]{KuBook} in order to obtain the coefficients of the Poincar\'e normal form of the system starting from the coefficients in \eqref{interm6} and \eqref{interm7}. More precisely, the Poincar\'e normal form of the system writes (adapting \cite[Lemma 8.9]{KuBook} to our case and notation)
\begin{equation}\label{normal}
\left\{\begin{array}{ll}\dot{z}=\frac{1}{2}G_{200}z^2+G_{011}|w|^2+\frac{1}{6}G_{300}z^3+G_{111}z|w|^2+O(|(z,w,\overline{w})|^4),\\
\dot{w}=i\sqrt{m-1}w+H_{110}zw+\frac{1}{2}H_{210}z^2w+\frac{1}{2}H_{021}w|w|^2+O(|(z,w,\overline{w})|^4),\end{array}\right.
\end{equation}
where the coefficients of the normal form \eqref{normal} are
\begin{equation}\label{interm8}
G_{200}=g_{200}=-(\sigma+2), \ G_{011}=G_{111}=G_{300}=0,
\end{equation}
and
\begin{equation}\label{interm9}
H_{110}=h_{110}=\frac{(3m+1)\sigma}{4(m-1)},
\end{equation}
\begin{equation}\label{interm10}
\begin{split}
H_{210}&=\frac{i}{2\sqrt{m-1}}\left[h_{200}(h_{020}-2g_{110})-|h_{101}|^2-h_{011}\overline{h}_{200}\right]\\
&=-\frac{i}{2\sqrt{m-1}}\frac{(3m+1)^2\sigma^2}{16(m-1)^2},\\
H_{021}&=\frac{i}{2\sqrt{m-1}}\left[h_{011}h_{020}-\frac{1}{2}g_{020}h_{101}-2|h_{011}|^2-\frac{1}{3}|h_{002}|^2\right]\\
&=\frac{i}{2\sqrt{m-1}}\left[-\frac{(m+1)^2}{12(m-1)^2}-\frac{5(m+1)}{24(m-1)}-\frac{1}{12}\right].
\end{split}
\end{equation}
Gathering all these calculations, we can thus write the obtained Poincar\'e normal form of the system:
\begin{equation*}
\left\{\begin{array}{ll}\dot{z}=-\frac{\sigma+2}{2}z^2+O(|(z,w,\overline{w})|^4),\\
\dot{w}=i\sqrt{m-1}w+\frac{(3m+1)\sigma}{4(m-1)}zw+\frac{1}{2}H_{210}z^2w+\frac{1}{2}H_{021}w|w|^2+O(|(z,w,\overline{w})|^4),\end{array}\right.
\end{equation*}
with $H_{210}$, $H_{021}$ given in \eqref{interm10}. Undoing the change of variable $w=v+iu$ in order to get back to the variables $(z,v,u)$ and keeping only the terms up to order two, we finally find the Poincar\'e normal form of the system \eqref{interm3}:
\begin{equation}\label{normal.syst}
\left\{\begin{array}{ll}\dot{z}=-\frac{\sigma+2}{2}z^2+O(|(z,v,u)|^3),\\
\dot{v}=-\sqrt{m-1}u+\frac{(3m+1)\sigma}{4(m-1)}zv+O(|(z,v,u)|^3),\\
\dot{u}=\sqrt{m-1}v+\frac{(3m+1)\sigma}{4(m-1)}zu+O(|(z,v,u)|^3).\end{array}\right.
\end{equation}

\medskip

\noindent \textbf{Step 3. Normal form in cylindrical coordinates.} Following again \cite[Section 3.1F]{Wig}, we transform the normal form \eqref{normal.syst} to cylindrical coordinates by letting $v=r\cos\,\theta$, $u=r\sin\,\theta$ and unchanged $z$. After standard calculations, we obtain the following normal form in cylindrical coordinates:
\begin{equation}\label{normal.cyl}
\left\{\begin{array}{ll}\dot{z}=-\frac{\sigma+2}{2}z^2+O(|(z,r)|^3),\\
\dot{r}=\frac{(3m+1)\sigma}{4(m-1)}zr+O(|(z,r)|^3),\\
\dot{\theta}=\sqrt{m-1}+O(|(z,r)|^3),\end{array}\right.
\end{equation}
and we notice that $\dot{r}>0$ whenever the connection does not lie in the plane $r=0$ or $z=0$. If we are not in such case, we can integrate (up to third order) the system formed by the first two equations in \eqref{normal.cyl} to get in a neighborhood of the origin in the system \eqref{normal.cyl}
$$
\frac{dz}{dr}\sim-\frac{(\sigma+2)(m-1)}{2(3m+1)\sigma}\frac{z}{r},
$$
or equivalently
$$
z\sim Cr^{-(\sigma+2)(m-1)/2\sigma(3m+1)},
$$
thus the trajectories in a neighborhood of the critical point (that became the origin in the system \eqref{normal.cyl}) tend to hyperbolas that do not enter the origin (which behaves as a repeller in our case). The only orbits that may connect to the origin in the system \eqref{normal.cyl} are those contained either in the plane $\{r=0\}$ or in the plane $\{z=0\}$. Taking into account that in the original variables $z=X$, the orbits contained in the plane $\{z=0\}$ are in fact contained in the plane $\{X=0\}$ and they do not contain profiles. As for the orbits contained in the plane $\{r=0\}$, this means at the same time that $v=0$ and $u=0$, and we infer from \eqref{interm2} that these orbits are contained in the line of equations $Z=1$, $(m-1)Y+\sigma X=0$. But the latter line is nothing else than the hyperbola given by
\begin{equation}\label{hyp}
f(\xi)=\left(\frac{1}{m-1}\right)^{1/(m-1)}\xi^{-\sigma/(m-1)},
\end{equation}
which is not a profile for $\sigma>0$ (although it is the constant solution in the non-weighted case $\sigma=0$). We thus conclude that there is no blow up profile entering the critical point $P_3$.
\end{proof}
The numerical experiment represented in Figure \ref{fig3} shows how an orbit in the phase space, passing through a point which lies very close to the critical point $P_3$ (namely $(X,Y,Z)=(0.05,0.01,1.01)$), does not enter the point $P_3$, as proved above. In fact, the orbit describes outgoing spirals around the point $P_3$ before going out of the spiral and approaching the critical point $Q_3$.

\begin{figure}[ht!]
  \begin{center}
  \subfigure[Plot in 3D]{\includegraphics[width=7.5cm,height=6cm]{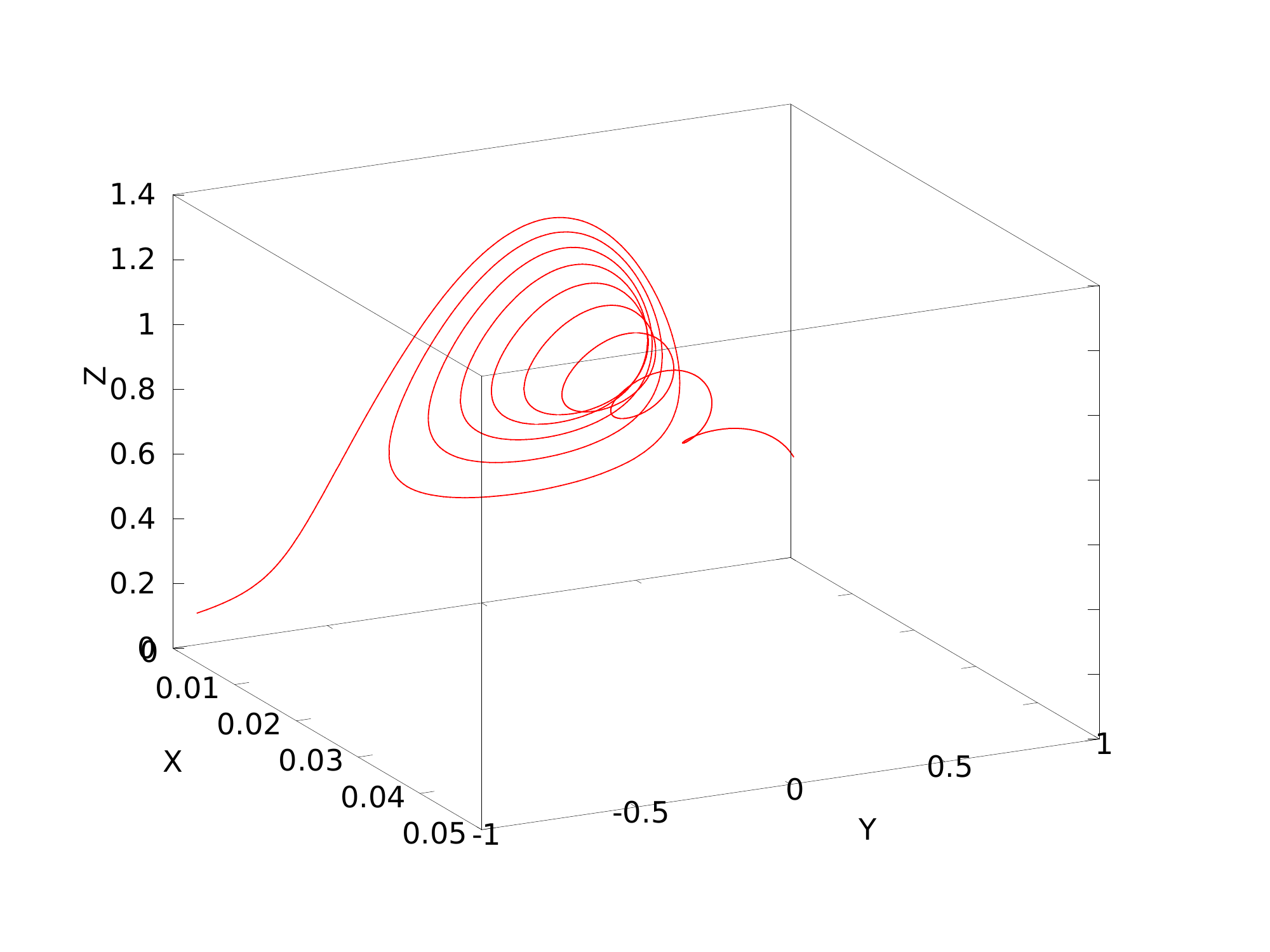}}
  \subfigure[Plot in 2D]{\includegraphics[width=7.5cm,height=6cm]{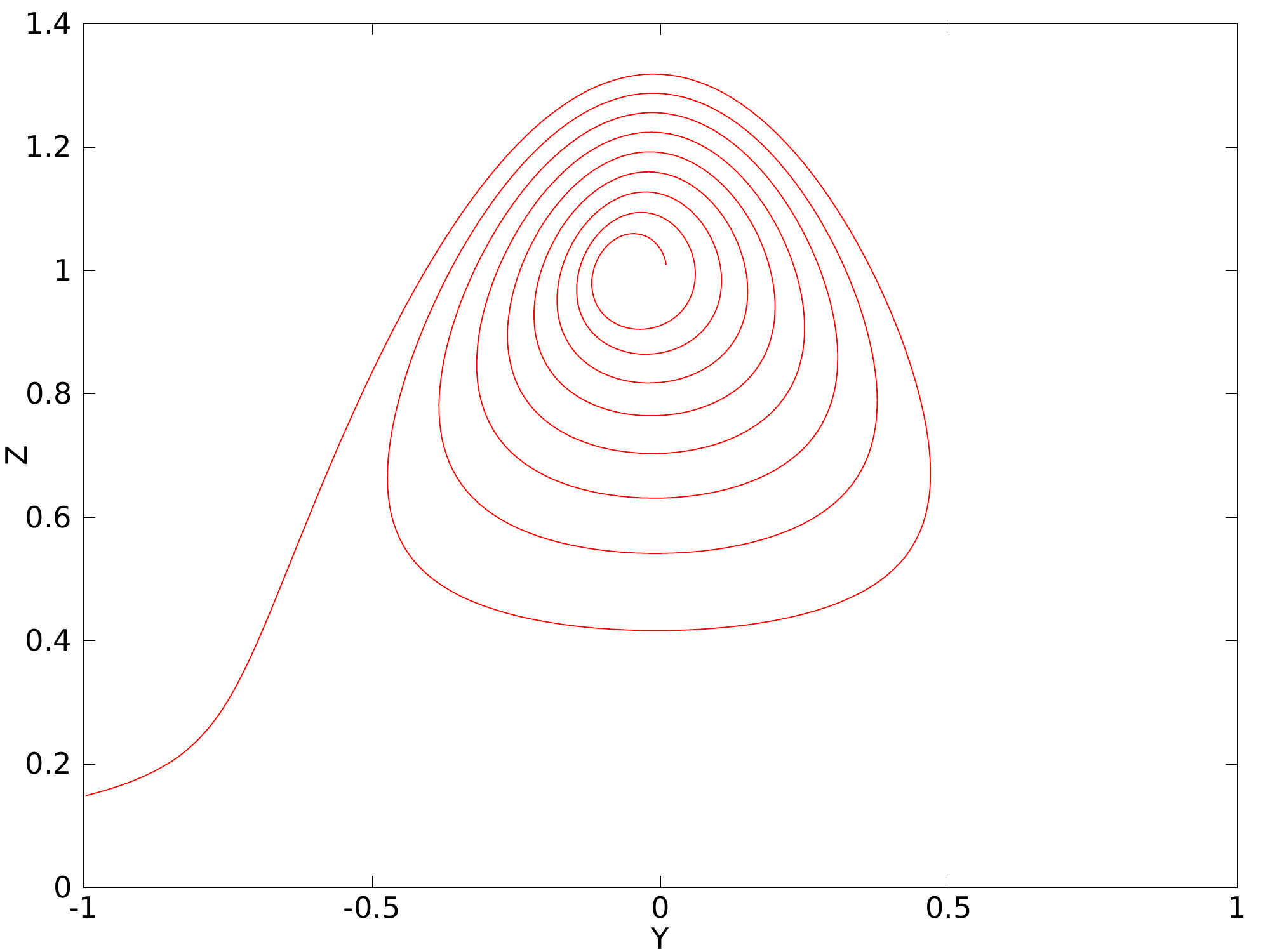}}
  \end{center}
  \caption{Orbit passing through a point very close to $P_3$ and going out. Experiment for $m=2$, $p=2$ and $\sigma=1$ with plot in 3D and projection on the $(Y,Z)$ plane.}\label{fig3}
\end{figure}

After this rather lengthy and technical lemma, the proof of Theorem \ref{th.nontail} becomes a corollary.
\begin{proof}[Proof of Theorem \ref{th.nontail}] Assume for contradiction that there exists at least a profile $f(\xi)$ solution to \eqref{SSODE}, such that $f(\xi)>0$ for any $\xi\geq0$. Then this profile has to be contained in an orbit in the phase space associated to the system \eqref{PSsyst2}, and from the list of local behaviors near the critical points, the only behavior which is positive as $\xi\to\infty$ would be the one given by entering the point $P_3$. But we just proved in Lemma \ref{lem.P3} that there is no orbit entering $P_3$ that contains blow up profiles. Thus, there are no profiles that are positive everywhere, as claimed.
\end{proof}

\noindent \textbf{Remark.} In the homogeneous case $\sigma=0$, there is one profile that is positive everywhere and enters the point $P_3$, which is the constant profile
$$
f(\xi)=\left(\frac{1}{m-1}\right)^{1/(m-1)}.
$$
When passing to $\sigma>0$, this profile does no longer exist, and an equivalent of it is the hyperbola \eqref{hyp}, which is not a solution. This is an interesting difference introduced by the weight.

\section{Global analysis in the phase space. Existence and multiplicity for $\sigma>0$ small}\label{sec.exist}

We devote this section to establishing the connections in the phase space between the critical points, emphasizing in particular on those orbits which contain profiles with the desired behavior. As we shall see, a cylinder splitting the space into two separate regions will be of utmost importance. At the end, we will prove the existence and multiplicity result Theorem \ref{th.exist} for sufficiently small $\sigma>0$.

\subsection{The cylinder}\label{subsec.global}

We begin with the remark that on the invariant plane $\{X=0\}$, the system \eqref{PSsyst2} reduces to
\begin{equation}\label{syst.X0}
\left\{\begin{array}{ll}\dot{Y}=-\frac{m+1}{2}Y^2+1-Z,\\ \dot{Z}=(m-1)YZ,\end{array}\right.
\end{equation}
which is nothing else that the same system obtained when letting $\sigma=0$ and considering only the last two equations. This system leads to
$$
\frac{dY}{dZ}=\frac{-(m+1)Y^2/2+1-Z}{(m-1)YZ},
$$
which can be integrated explicitly to find its general solution
\begin{equation}\label{interm11}
Y^2=KZ^{-(m+1)/(m-1)}-\frac{(m+1)Z-2m}{m(m+1)}, \quad K\in\real.
\end{equation}
The most relevant solution in this one-parameter family is the one with $K=0$, namely
\begin{equation}\label{cylinder}
Y^2=\frac{2}{m+1}-\frac{1}{m}Z,
\end{equation}
which is a separatrix of the phase plane in the non-weighted case $\sigma=0$. For $K>0$, the corresponding curve given by \eqref{interm11} lies in the region of the plane $(Y,Z)$ outside the explicit curve given in \eqref{cylinder}, while for $K<0$ it lies in the bounded region inside the same curve. This suggests us to consider, in the whole phase space, the cylinder of equation \eqref{cylinder} with $X\geq0$. The normal direction to the cylinder at some point is given by the vector $(0,2Y,1/m)$, thus the direction of the flow on the surface of the cylinder is given by the sign of the expression
$$
F(X,Y,Z):=-(m+1)Y^3+2Y-2YZ+\frac{m-1}{m}YZ+\frac{\sigma}{m}XZ.
$$
Taking into account that $Z=2m/(m+1)-mY^2$, we obtain that
$$
F(X,Y,Z)=\frac{\sigma}{m}XZ\geq0,
$$
thus the flow on the cylinder has always one direction, towards the exterior of it. This implies in particular the following very useful result:
\begin{lemma}\label{lem.cyl}
Any orbit that contains a point outside the cylinder of equation \eqref{cylinder}, remains outside the cylinder forever after this point.
\end{lemma}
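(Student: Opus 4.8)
The plan is to reformulate the statement as a forward-invariance property for a single scalar function. Let me introduce
\[
g(Y,Z):=Y^2-\frac{2}{m+1}+\frac{1}{m}Z,
\]
whose zero level set is precisely the cylinder \eqref{cylinder}, and whose sign separates the two regions. A short computation on the explicit orbits \eqref{interm11} of the invariant plane $\{X=0\}$ gives $g=KZ^{-(m+1)/(m-1)}$, so in the relevant region $Z\geq0$ the exterior of the cylinder (the curves with $K>0$) is exactly $\{g>0\}$. Thus Lemma \ref{lem.cyl} is equivalent to showing that $\{g>0\}$ is invariant under the forward flow of \eqref{PSsyst2} in the variable $\eta$.

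First I would differentiate $g$ along a trajectory of \eqref{PSsyst2}. Using the equations for $\dot Y$ and $\dot Z$,
\[
\frac{d}{d\eta}\,g(Y(\eta),Z(\eta))=2Y\dot{Y}+\frac{1}{m}\dot{Z}=F(X,Y,Z),
\]
which is exactly the scalar $F$ computed just before the lemma. The decisive input, already established there, is that \emph{on the surface} $\{g=0\}$, where $Z=2m/(m+1)-mY^2$, all terms of $F$ cancel except the last, leaving $F=\frac{\sigma}{m}XZ$; since we work with $X\geq0$, $Z\geq0$ and $\sigma>0$, this is nonnegative. Hence whenever a trajectory meets the cylinder, the derivative of $g$ is $\geq0$: the flow can only push outward. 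I would then close the argument by a first-crossing (barrier) argument. Suppose for contradiction that an orbit has $g(\eta_0)>0$ but $g(\eta)\leq0$ for some later $\eta$, and let $\eta_1>\eta_0$ be the first time with $g(\eta_1)=0$. Since $g>0$ on $[\eta_0,\eta_1)$ we must have $\dot g(\eta_1)\leq0$; but $\dot g(\eta_1)=F(\eta_1)=\frac{\sigma}{m}X(\eta_1)Z(\eta_1)\geq0$, so necessarily $\dot g(\eta_1)=0$, i.e. $X(\eta_1)=0$ or $Z(\eta_1)=0$.

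The remaining, and genuinely delicate, point is to rule out these two tangency possibilities, which is where the non-strict inequality $F\geq0$ must be upgraded. Both correspond to invariant coordinate planes of \eqref{PSsyst2}. If $X(\eta_1)=0$, then invariance of $\{X=0\}$ (from $\dot X=X(\frac{m-1}{2}Y-X)$) forces the whole orbit into that plane, where $g=KZ^{-(m+1)/(m-1)}$ has a fixed sign because $K$ is conserved along \eqref{interm11}; this contradicts $g$ passing from positive to zero. If instead $Z(\eta_1)=0$, invariance of $\{Z=0\}$ (from the third equation) confines the orbit to that plane, which carries no profiles and can be discarded, or treated directly since there $g=Y^2-2/(m+1)$ and $Y=\pm h_0$ are equilibria of the reduced $Y$-dynamics. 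Either way one reaches a contradiction, so $g$ can never return to $0$ and the orbit stays in $\{g>0\}$ for all subsequent $\eta$. I expect this tangency analysis on the invariant planes to be the main obstacle, since it is precisely the case where transversality fails and one must instead invoke the explicit first integral and the invariance of the coordinate planes.
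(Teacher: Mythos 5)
Your proposal is correct, and its core is the same computation the paper uses: differentiating $g(Y,Z)=Y^2-\tfrac{2}{m+1}+\tfrac{1}{m}Z$ along the flow and finding that on the level set $\{g=0\}$ all terms cancel except $F=\tfrac{\sigma}{m}XZ\geq0$, so the flow on the cylinder \eqref{cylinder} points outward. The difference is one of completeness: the paper stops there and states Lemma \ref{lem.cyl} as an immediate consequence of the one-directional flow, whereas you correctly observe that the non-strict inequality $F\geq0$ leaves open a tangential return through points of the cylinder with $X=0$ or $Z=0$, and you close this gap with a first-crossing argument plus the invariance of the coordinate planes $\{X=0\}$ and $\{Z=0\}$ (by uniqueness for the polynomial vector field \eqref{PSsyst2}, an orbit touching either plane lies in it for all time), finishing off those planar cases via the conserved quantity $K$ from \eqref{interm11} on $\{X=0\}$ and the reduced $Y$-dynamics $\dot{Y}=1-\tfrac{m+1}{2}Y^2$ on $\{Z=0\}$, where $Y=\pm h_0$ are equilibria that cannot be reached in finite time. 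All of these steps check out (in particular the identity $g=KZ^{-(m+1)/(m-1)}$ on $\{X=0\}$ and the sign bookkeeping at the first crossing time $\eta_1$). So your argument buys genuine rigor at the degenerate tangency locus that the paper glosses over, at the cost of a longer case analysis; the paper's version buys brevity by implicitly relying on the fact that the relevant orbits of interest have $X>0$ and $Z>0$, where the transversal push is strict.
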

We also notice that the three critical points $P_0$, $P_1$ and $P_2$ lie on the cylinder, thus it is important to establish whether the orbits going out of $P_0$ and $P_2$, respectively entering $P_1$, do this through the region outside or inside the cylinder. This is the goal of the following two lemmas.
\begin{lemma}\label{lem.P0P2global}
The orbits going out of both critical points $P_0$ and $P_2$, do this in the region that lie in the exterior of the cylinder of equation \eqref{cylinder}.
\end{lemma}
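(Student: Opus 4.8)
The plan is to track a single scalar along the flow, namely the signed cylinder function
\[
G(X,Y,Z):=Y^2-\frac{2}{m+1}+\frac{1}{m}Z,
\]
so that the cylinder \eqref{cylinder} is exactly $\{G=0\}$ and, by the $K>0$ versus $K<0$ dichotomy recorded just before Lemma \ref{lem.cyl}, its \emph{exterior} is precisely $\{G>0\}$. Since $P_0$ and $P_2$ both lie on the cylinder, $G$ vanishes at these points; moreover the flux function $F$ introduced above also vanishes there, because $Z=0$ at both points (and additionally $X=0$ at $P_0$). Thus the flow is tangent to the cylinder at $P_0$ and $P_2$, and the sign of $F$ on the cylinder by itself cannot decide on which side the outgoing orbits leave. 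This tangency is the \textbf{main obstacle}: it forces us to follow $G$ along the orbit rather than merely evaluate a transversality condition at the critical point.

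The key step is to compute $\dot G$ along \eqref{PSsyst2}. A direct calculation — equivalently, re-expressing the already introduced $F(X,Y,Z)$ off the cylinder by substituting $Z=m\bigl(G-Y^2+\tfrac{2}{m+1}\bigr)$ — yields the identity
\[
\dot G = F(X,Y,Z) = -(m+1)\,Y\,G+\frac{\sigma}{m}\,XZ,
\]
valid in the whole phase space. The crucial feature is that the inhomogeneous term $\tfrac{\sigma}{m}XZ$ is nonnegative on the physically relevant region $\{X\ge0,\ Z\ge0\}$, which is invariant since both $\{X=0\}$ and $\{Z=0\}$ are invariant planes of \eqref{PSsyst2}. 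Introducing the positive integrating factor $\mu(\eta)=\exp\bigl((m+1)\int_{\eta_0}^{\eta}Y(s)\,ds\bigr)$, the identity becomes $\tfrac{d}{d\eta}(\mu G)=\mu\,\tfrac{\sigma}{m}XZ\ge0$, so $\mu G$ is non-decreasing along any orbit contained in $\{X\ge0,\ Z\ge0\}$.

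To conclude, consider an orbit leaving $P_0$ (respectively $P_2$) into the region $\{X>0,\ Z>0\}$, which is where the orbits carrying profiles lie by Lemma \ref{lem.P0P1} (respectively Lemma \ref{lem.P2}). As $\eta\to-\infty$ the orbit tends to the critical point, where $Y\to h_0>0$; hence $\int_{\eta_0}^{\eta}Y\to-\infty$ and $\mu\to0$, while $G\to0$, so $\mu G\to0$. Monotonicity then gives $\mu G\ge0$, i.e. $G\ge0$, for all $\eta$, so the orbit never enters the interior of the cylinder. For strict positivity I would use that, by the cited local analysis, these orbits immediately enter $\{X>0,\ Z>0\}$, so there is an $\eta_*$ with $X(\eta_*)Z(\eta_*)>0$; the source is then strictly positive on a neighborhood of $\eta_*$, $\mu G$ strictly increases there, and combined with $\mu G\ge0$ beforehand this forces $\mu G(\eta)>0$, hence $G(\eta)>0$, for $\eta>\eta_*$. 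Thus the outgoing orbits of $P_0$ and $P_2$ lie in the exterior of the cylinder, and by Lemma \ref{lem.cyl} they remain there thereafter.

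Finally, a short remark clarifies the scope of the statement: the boundary orbits leaving $P_0$ within the invariant planes $\{X=0\}$ or $\{Z=0\}$, along which the source $\tfrac{\sigma}{m}XZ$ vanishes identically, satisfy $G\equiv0$ and so stay on the cylinder — in $\{X=0\}$ this is the separatrix \eqref{cylinder} itself, and in $\{Z=0,\ Y=h_0\}$ it is the connection $P_0\to P_2$. These carry no profiles and are therefore not the orbits to which the conclusion applies, so no contradiction arises.
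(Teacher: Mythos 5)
Your proof is correct, and it takes a genuinely different route from the paper. The paper handles the two points by two separate arguments: for $P_0$ it argues by contradiction, parametrizing by $Y$ near $h_0$ and comparing the coordinate $Z_2(Y)$ on the cylinder with $Z_1(Y)$ on a putative orbit exiting into the interior, deriving a differential inequality for $Z_2-Z_1$ that contradicts $Z_1(h_0)=Z_2(h_0)=0$; for $P_2$ it uses a purely local transversality computation, checking that the outgoing eigenvector $e_3$ of the linearization satisfies $n(P_2)\cdot e_3=\frac{\sigma(m-1)}{2m}h_0>0$ against the normal of the cylinder. Your argument instead rests on the single global identity
\begin{equation*}
\dot G=-(m+1)\,Y\,G+\frac{\sigma}{m}\,XZ,\qquad G:=Y^2-\frac{2}{m+1}+\frac{1}{m}Z,
\end{equation*}
which I have verified against \eqref{PSsyst2} (the $YZ$-terms combine as $-2+\frac{m-1}{m}=-\frac{m+1}{m}$), together with the integrating factor $\mu=\exp\bigl((m+1)\int Y\bigr)$ and the limit $\mu G\to0$ as $\eta\to-\infty$, which holds because both $P_0$ and $P_2$ lie on the cylinder ($G\to0$) and have $Y$-coordinate $h_0>0$ ($\mu\to0$). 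This buys several things at once: it treats $P_0$ and $P_2$ uniformly, it yields strict exteriority cleanly (since $XZ>0$ along any orbit not contained in the invariant planes, by uniqueness these planes cannot be touched at finite $\eta$, so $\mu G$ is strictly increasing), it subsumes Lemma \ref{lem.cyl} (on $\{G=0\}$ the identity gives $\dot G=\frac{\sigma}{m}XZ\ge0$, the paper's flux computation), and your closing remark correctly delimits the scope — the boundary orbits of $P_0$ inside $\{X=0\}$ (the $K=0$ separatrix) and inside $\{Z=0\}$ (the heteroclinic $P_0\to P_2$ along $Y=h_0$) satisfy $G\equiv0$ and carry no profiles, a point the paper glosses over with the parenthetical that "the cylinder itself is not a solution." What the paper's route buys in exchange is locality (no need to control the orbit's history or the convergence of the integrating factor) and the explicit eigenvector data at $P_2$; but your Lyapunov-type argument is arguably more robust, since it avoids the somewhat delicate sign and monotonicity claims ($\dot Y<0$, the orbit decreasing in $Y$) on which the paper's comparison near $P_0$ relies.
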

\begin{proof}
Let us notice first that on the cylinder we have $\dot{Y}\leq0$. Indeed, we have $Z=2m/(m+1)-mY^2$, thus
$$
\dot{Y}=1-Z-\frac{m+1}{2}Y^2=-\frac{m-1}{2m}\left(\frac{2}{m+1}-Y^2\right)\leq0,
$$
since $|Y|\leq h_0=\sqrt{2/(m+1)}$. Assume now for contradiction that there is at least one orbit going out of the critical point $P_0$ inside the cylinder of equation \eqref{cylinder}. We then compare for the same value of $Y\in(0,h_0)$ (and sufficiently close to $h_0$), the coordinates $Z_2$ (on the cylinder) and $Z_1$ (the true value of $Z$ for the given $Y$ on the orbit that we assumed to go inside the cylinder). Thus, $Z_2>Z_1$ by the assumption that the orbit goes inside the cylinder. On the other hand, $Z_2=Z_2(Y)$ satisfies the equation of the cylinder, namely
$$
\frac{dZ_2}{dY}=\frac{(m-1)YZ_2}{-(m+1)Y^2/2+1-Z_2},
$$
while $Z_1=Z_1(Y)$ is a true orbit in the phase space, thus a solution to the whole system \eqref{PSsyst2}, that is
$$
\frac{dZ_1}{dY}=\frac{(m-1)YZ_1+\sigma XZ_1}{-(m+1)Y^2/2+1-Z_1}.
$$
Hence
\begin{equation}\label{interm13}
\begin{split}
\frac{d(Z_2-Z_1)}{dY}&=\frac{(m-1)Y(Z_2-Z_1)(1-(m+1)Y^2/2)}{(-(m+1)Y^2/2+1-Z_1)(-(m+1)Y^2/2+1-Z_2)}\\
&-\frac{\sigma XZ_1}{-(m+1)Y^2/2+1-Z_1}.
\end{split}
\end{equation}
Since on the one hand $Z_2=Z_2(Y)$ lies on the cylinder (where $\dot{Y}<0$), and on the other hand in a neighborhood of $P_0$, $Z_1=Z_1(Y)$ lies on an orbit which is decreasing with respect to the variable $Y$ (as it goes towards the interior of the cylinder), we infer that
$$
-\frac{m+1}{2}Y^2+1-Z_2<0, \quad -\frac{m+1}{2}Y^2+1-Z_1<0
$$
and we deduce from \eqref{interm13} that $d(Z_2-Z_1)/dY>0$ for any $\sigma>0$ in a sufficiently small left-neighborhood of $P_0$. Thus, there exists some $\epsilon>0$ sufficiently small such that, for $Y\in(h_0-\epsilon,h_0)$, the distance $Z_2(Y)-Z_1(Y)$ increases with $Y$. But this is a contradiction with the fact that for $Y=h_0$, we have $Z_1(h_0)=Z_2(h_0)=0$, as both curves (the true orbit described by $Z_1$ and the cylinder described by $Z_2$) are assumed to start from $P_0$. This contradiction shows that all the orbits going out of $P_0$ do that strictly in the exterior of the cylinder of equation \eqref{cylinder} (since the cylinder itself is not a solution for $\sigma>0$).

It remains to study how the unique orbit (according to Lemma \ref{lem.P2}) going out of $P_2$ behaves with respect to the cylinder. This is much easier than the previous analysis, as this unique orbit goes out tangent to the direction given by the eigenvector
$$
e_3=\left(-\frac{m-1}{2(\sigma+3)},-1,\frac{\sigma(m-1)+4m}{2}h_0\right)
$$
corresponding to the only positive eigenvalue $\lambda_3=(m-1)(\sigma+2)h_0/2$. The normal to the cylinder at the point $P_2$ has the direction
$$
n(P_2)=\left(0,2h_0,\frac{1}{m}\right).
$$
Since
$$
n(P_2)\cdot e_3=\frac{\sigma(m-1)}{2m}h_0>0,
$$
it follows that the orbit going out of $P_2$ does this in the exterior region to cylinder of equation \eqref{cylinder}, as claimed.
\end{proof}
\begin{lemma}\label{lem.P1global}
The orbits entering the critical point $P_1$ in the phase space do this in the region that lie in the interior of the cylinder of equation \eqref{cylinder}.
\end{lemma}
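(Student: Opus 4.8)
The plan is to mimic the comparison argument used for $P_0$ in the proof of Lemma \ref{lem.P0P2global}, now adapted to the critical point $P_1=(0,-h_0,0)$ and aiming at the opposite conclusion (interior instead of exterior). Let me first explain why a direct eigenvector computation, of the kind used for $P_2$, will \emph{not} settle the matter. The linearization of \eqref{PSsyst2} at $P_1$ has a two-dimensional stable manifold tangent, at $P_1$, to the plane spanned by $(1,0,0)$ (eigenvalue $-(m-1)h_0/2$) and $(0,1,2mh_0)$ (eigenvalue $-(m-1)h_0$). Since the normal to the cylinder \eqref{cylinder} at $P_1$ is $n(P_1)=(0,-2h_0,1/m)$, one checks $n(P_1)\cdot(1,0,0)=0$ and $n(P_1)\cdot(0,1,2mh_0)=-2h_0+2h_0=0$; thus the stable tangent plane coincides to first order with the tangent plane of the cylinder. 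The first-order data are therefore inconclusive, and this tangency is precisely the main obstacle: the side on which the orbits enter must be extracted from a finer, second-order comparison.

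The comparison I would carry out exactly as for $P_0$. Parameterizing orbits by $Y$ in a right-neighborhood $Y\in(-h_0,-h_0+\epsilon)$ of $P_1$, denote by $Z_2=Z_2(Y)$ the value on the cylinder \eqref{cylinder} and by $Z_1=Z_1(Y)$ the value on an orbit entering $P_1$ from the region $\{X>0\}$; both satisfy $Z_1(-h_0)=Z_2(-h_0)=0$. Recalling that the interior of the cylinder is the set $Y^2+Z/m<2/(m+1)$, i.e. $Z_1<Z_2$ for the same $Y$, I argue by contradiction and suppose some orbit enters $P_1$ through the exterior, so that $Z_2-Z_1<0$ near $P_1$. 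The identity \eqref{interm13} holds verbatim for any such orbit. For $Y$ slightly above $-h_0$ one has $Y<0$ and $1-(m+1)Y^2/2>0$ (since $Y^2<2/(m+1)$), while $D_2:=-(m+1)Y^2/2+1-Z_2=\frac{m-1}{2}(Y^2-\frac{2}{m+1})<0$ and, under the exterior assumption, $D_1:=-(m+1)Y^2/2+1-Z_1=D_2+(Z_2-Z_1)<0$ as well.

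Feeding these signs into \eqref{interm13}, the first term has numerator sign $(m-1)\,Y\,(1-(m+1)Y^2/2)\,(Z_2-Z_1)=(+)(-)(+)(-)=(+)$ over $D_1D_2>0$, hence is positive, while the second term $-\sigma XZ_1/D_1>0$ because $\sigma XZ_1>0$ and $D_1<0$. Thus $d(Z_2-Z_1)/dY>0$ throughout a right-neighborhood of $-h_0$. Since $Z_2-Z_1$ vanishes at $Y=-h_0$ and has strictly positive derivative there, it must become strictly positive for $Y>-h_0$, contradicting $Z_2-Z_1<0$. This rules out orbits entering $P_1$ from the exterior; and since the cylinder itself is not a solution for $\sigma>0$ (the flow on it being strictly outward wherever $X>0$, by the computation $F=\frac{\sigma}{m}XZ$ preceding Lemma \ref{lem.cyl}), the orbits entering $P_1$ from $\{X>0\}$ must do so strictly in the interior, as claimed. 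The delicate point throughout is the self-referential sign bookkeeping: the sign of $d(Z_2-Z_1)/dY$ is deduced \emph{using} the exterior hypothesis, so one must verify, as above, that this renders the hypothesis immediately self-contradictory.
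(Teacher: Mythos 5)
Your proof is correct and follows essentially the same route as the paper's: a contradiction argument comparing $Z_1(Y)$ on the putative exterior orbit with $Z_2(Y)$ on the cylinder, parameterized by $Y$ near $-h_0$, using the sign structure of the identity \eqref{interm13} to show the difference has strictly the wrong monotonicity to vanish at $Y=-h_0$. Your preliminary observation that the stable eigenplane at $P_1$ (spanned by $(1,0,0)$ and $(0,1,2mh_0)$) is tangent to the cylinder, so that the first-order eigenvector test used for $P_2$ is inconclusive and a second-order comparison is forced, is a nice motivating remark the paper leaves implicit, but the core argument is identical.
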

\begin{proof}
The proof is very similar to the one we did for the orbits going out of $P_0$. Assume for contradiction that there is at least one orbit in the phase space associated to the system \eqref{PSsyst2} entering $P_1$ outside the cylinder. Define again for the same $Y\in(-h_0,0)$, the corresponding coordinates $Z_1=Z_1(Y)$ on this orbit and $Z_2=Z_2(Y)$ on the cylinder. From the assumption that the orbit enters $P_1$ outside the cylinder, we infer that $Z_2(Y)<Z_1(Y)$ for $Y\in(-h_0,0)$ sufficiently close to $-h_0$. It also follows easily that $\dot{Y}<0$ both on the cylinder (already proved in the proof of Lemma \ref{lem.P0P2global}) and on the orbit that lies outside of it, thus once more
$$
-\frac{m+1}{2}Y^2+1-Z_2<0, \quad -\frac{m+1}{2}Y^2+1-Z_1<0.
$$
We proceed as in the proof of Lemma \ref{lem.P0P2global} and compute for comparison
\begin{equation}\label{interm14}
\begin{split}
\frac{d(Z_1-Z_2)}{dY}&=\frac{(m-1)Y(Z_1-Z_2)(1-(m+1)Y^2/2)}{(-(m+1)Y^2/2+1-Z_1)(-(m+1)Y^2/2+1-Z_2)}\\
&+\frac{\sigma XZ_1}{-(m+1)Y^2/2+1-Z_1}<0,
\end{split}
\end{equation}
for any $\sigma>0$ and $-h_0<Y<0$, since both terms in the right hand side of \eqref{interm14} are negative. Thus, there exists some $\epsilon>0$ sufficiently small such that, for $Y\in(-h_0,-h_0+\epsilon)$, the distance $Z_1(Y)-Z_2(Y)$ increases while $Y$ decreases, thus becoming even more positive. But this is a contradiction with the fact that for $Y=-h_0$, we have $Z_1(-h_0)=Z_2(-h_0)=0$, as both curves are assumed to enter $P_1$. This contradiction shows that all the orbits entering $P_1$ do that strictly through the interior of the cylinder of equation \eqref{cylinder} (since the cylinder itself is not a solution for $\sigma>0$).
\end{proof}
We picture in Figure \ref{fig5} the cylinder \eqref{cylinder} together with a sample orbit in the phase space from all the critical points discussed above: either going out of $P_0$ or $P_2$ (and in each case, we see how they travel outside the cylinder and overpass it) or entering $P_1$ (where we see how the orbit stays in the interior of the cylinder, oscillates for a few times and then starts to increase rapidly with respect to the $X$ coordinate).
\begin{figure}[ht!]
  \begin{center}
  \includegraphics[width=12cm,height=10cm]{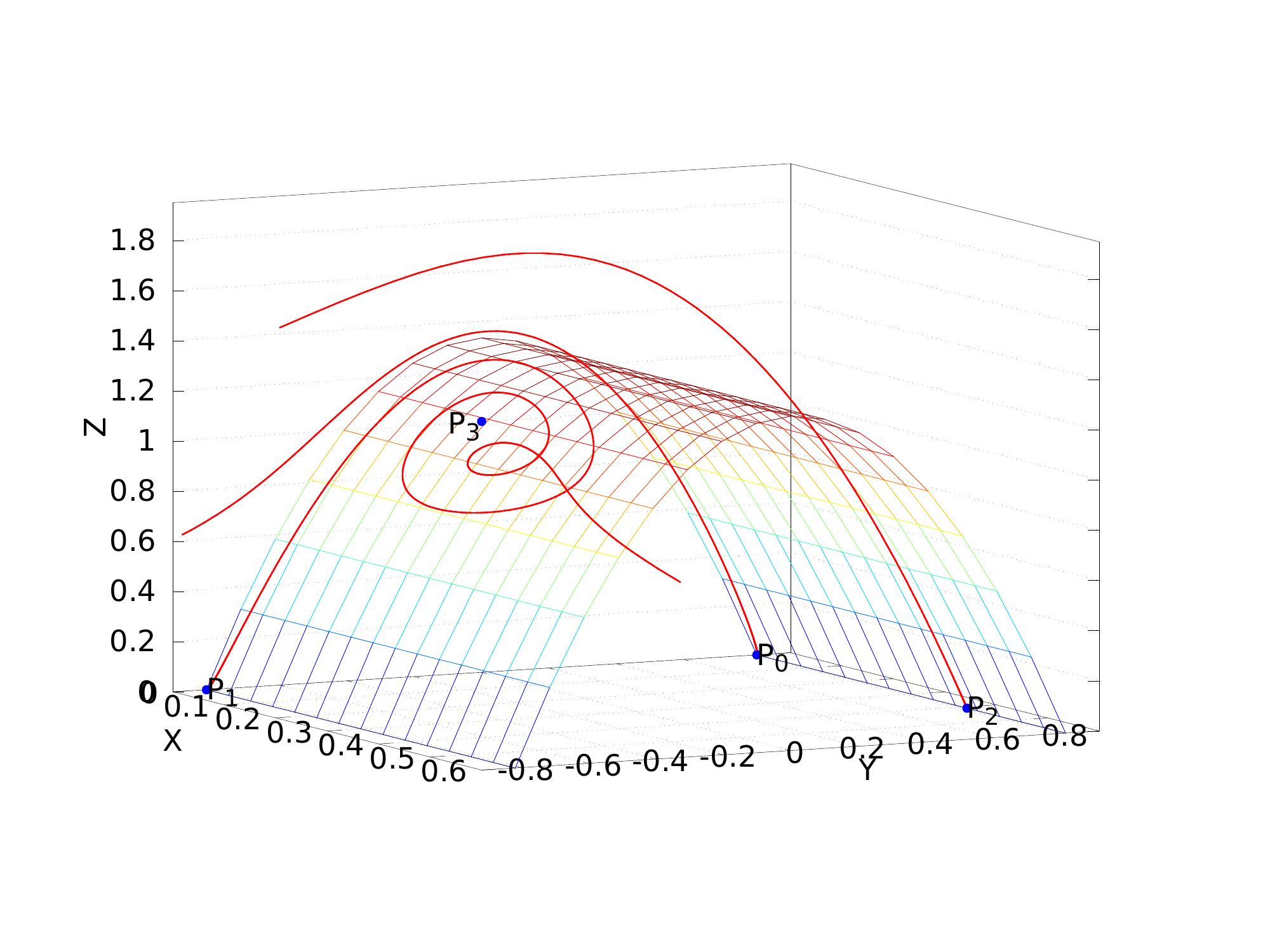}
  \end{center}
  \caption{The cylinder and sample orbits going out of $P_0$ and $P_2$, respectively entering $P_1$.}\label{fig5}
\end{figure}

All these results gather into a consequence of utmost importance for the rest of our analysis, that we state below.
\begin{proposition}\label{prop.interf}
Any profile $f(\xi)$ solution to \eqref{SSODE} for $\xi\in(0,\infty)$ having an interface at some finite point $\xi_0\in(0,\infty)$, is positive at the origin, that is $f(0)=a>0$.
\end{proposition}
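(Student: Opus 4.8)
The plan is to translate the interface condition into a statement about the $\omega$-limit of the corresponding orbit in the phase space of \eqref{PSsyst2}, and then to use the cylinder \eqref{cylinder} as a trapping region that pins down the orbit's behavior at the origin. First I would record the dictionary between the two ends of the orbit and the two ends of the $\xi$-interval. Since $d\eta/d\xi=f^{-(m-1)/2}/\sqrt{m(m-1)}\to\infty$ as $f\to0$, an interface at a finite point $\xi_0>0$ forces $\eta\to+\infty$, and by the behavior \eqref{behP1} of Lemma \ref{lem.P0P1} the interface behavior is precisely the signature of an orbit \emph{entering} $P_1$. Thus any profile with a finite interface is contained in an orbit whose $\omega$-limit is $P_1$, and its value at the origin is governed by the critical point from which that same orbit emanates as $\xi\to0^+$.

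Next I would confine the orbit. By Lemma \ref{lem.P1global} the orbit reaches $P_1$ through the interior of the cylinder \eqref{cylinder}, while Lemma \ref{lem.cyl} shows the flow crosses that cylinder only outward. Consequently, were the orbit ever outside the cylinder, it would remain outside for all larger $\eta$ and could not approach $P_1$ from the inside; hence the whole orbit, and in particular its source as $\xi\to0^+$, lies in the (closed) interior of the cylinder.

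It then remains to enumerate the critical points from which the orbit can emanate while producing a genuine behavior as $\xi\to0^+$. From the local analysis, the only such points are $Q_1$, for which $f(0)=a>0$ (Lemma \ref{lem.Q1}), and the two points giving $f(0)=0$, namely $P_2$ with $f\sim c\xi^{2/(m-1)}$ (Lemma \ref{lem.P2}) and $Q_5$ with $f\sim K\xi^{1/m}$ (Lemma \ref{lem.Q5}); the remaining points are irrelevant here, since $P_0$ and $Q_2$ start the profile at a \emph{positive} abscissa, $Q_3$ is a sink, $Q_4$ carries no orbit, and $P_3$ corresponds to $\xi\to\infty$ rather than to the origin. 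By Lemma \ref{lem.P0P2global} the orbit emanating from $P_2$ lies outside the cylinder, and the outgoing orbit from $Q_5$ lies outside as well (there $|Y|\to\infty$, so $Y^2>2/(m+1)$), whereas $Q_1$ sits inside the cylinder. Since our orbit is interior, it must emanate from $Q_1$, which yields $f(0)=a>0$ and proves the proposition.

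The main obstacle is the confinement step: one must be sure that the orbit carrying the interface cannot leave and re-enter the cylinder, which is exactly what the one-sided flow direction in Lemma \ref{lem.cyl} guarantees, combined with the fact, established in Lemma \ref{lem.P1global}, that the approach to $P_1$ happens from inside. The only other delicate point is the completeness of the enumeration of origin behaviors, in particular the implicit assumption (justified by the global phase-space analysis and the absence of recurrence) that the backward limit set of the orbit is a single critical point rather than a more complicated invariant set.
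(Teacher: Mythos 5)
Your proof follows essentially the same route as the paper's: the interface behavior identifies the orbit as one entering $P_1$ (Lemma \ref{lem.P0P1}), Lemmas \ref{lem.P1global} and \ref{lem.cyl} confine the whole orbit to the interior of the cylinder \eqref{cylinder}, and elimination of the remaining possible sources leaves $Q_1$, whence $f(0)=a>0$ by Lemma \ref{lem.Q1}. Two points in your write-up need repair, though both are fixable with tools you already put on the table. First, dismissing $P_0$ and $Q_2$ merely because they ``start the profile at a positive abscissa'' begs the question: a hypothetical connection from $P_0$ to $P_1$ would yield a profile with a backward interface at some $\xi_1>0$ which, extended by zero (legitimately, since $(f^m)'(\xi_1)=0$ there), is a solution on $(0,\infty)$ with an interface at $\xi_0$ and $f(0)=0$ --- exactly the kind of counterexample the proposition must exclude. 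The paper rules these points out the same way it rules out $P_2$ and $Q_5$: orbits leaving $P_0$ do so into the exterior of the cylinder (Lemma \ref{lem.P0P2global} covers $P_0$ as well as $P_2$), and orbits from $Q_2$ have $Y\to+\infty$, hence $Y^2>2/(m+1)$, so they too start outside and stay outside by Lemma \ref{lem.cyl}; your own confinement step then excludes them from carrying the orbit that enters $P_1$. Second, the claim that ``$Q_1$ sits inside the cylinder'' is not meaningful as stated: $Q_1$ lies at infinity along the cylinder's axis ($X\to\infty$ with $Y$, $Z$ bounded), and orbits emanating from it with $|Y|>h_0$ start outside the cylinder; the conclusion should rest purely on elimination --- the interior orbit must emanate from the only source not excluded, namely $Q_1$ --- and not on any claim about where $Q_1$ sits relative to the cylinder. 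With these two corrections your argument coincides with the paper's proof, including the shared (and honestly flagged) implicit assumption that the backward limit set of the orbit is a single critical point.
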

\begin{proof}
We infer from the local analysis performed in Section \ref{sec.local} and specially Lemma \ref{lem.P0P1} that profiles with interface belong to orbits entering the critical point $P_1$. Lemma \ref{lem.P1global} shows that such orbits enter $P_1$ through the interior of the cylinder of equation \eqref{cylinder}. On the other hand, we deduce from Lemmas \ref{lem.P0P2global} and \ref{lem.cyl} that the orbits going out of the critical points $P_0$ and $P_2$ (and similarly for the critical points $Q_2$ and $Q_5$ at infinity, which start with coordinate $Y\to\infty$) do that outside the cylinder and thus, such orbits stay forever outside the cylinder. By discarding all the other options, we find that all the orbits entering the critical point $P_1$ necessarily come from the unstable node $Q_1$ on the Poincar\'e hypersphere. Lemma \ref{lem.Q1} shows that profiles contained in such orbits satisfy $f(0)=a>0$ (with any possible slope at the origin), ending the proof.
\end{proof}
In order to establish existence or non-existence, it only remains to study whether such profiles contained in orbits connecting $Q_1$ and $P_1$ are good (that is, $f'(0)=0$) or not. This is the goal of the remaining part of the paper.

\subsection{Existence of multiple solutions for $\sigma>0$ small}\label{subsec.exist}

In this subsection we use, only for the proof of existence of multiple solutions, an alternative phase space associated to a system already used in our previous work \cite[Section 2]{IS2}. We thus let
\begin{equation}\label{PSchange2}
x(\eta)=f^{m-1}(\xi), \ y(\eta)=(f^{m-2}f')(\xi), \ z(\eta)=\xi, \ \frac{d\eta}{d\xi}=mx(\eta),
\end{equation}
and Eq. \eqref{SSODE} transforms into the following system
\begin{equation}\label{PSsyst1}
\left\{\begin{array}{ll}\dot{x}=m(m-1)xy,\\
\dot{y}=-my^2+\frac{1}{m-1}x-z^{\sigma}x^2,\\
\dot{z}=mx,\end{array}\right.
\end{equation}
where derivatives are taken with respect to the independent variable $\eta$. The interface point $P_1$ in the phase space associated to the system \eqref{PSsyst2} is mapped into the critical half-line $\{x=0, \ y=0, \ z>0\}$ in the system \eqref{PSsyst1}, and Proposition \ref{prop.interf} gives in the new system that any orbit entering a critical point $(0,0,\xi_0)$ of the half-line $\{x=0, \ y=0, \ z>0\}$ has to intersect the plane $\{z=0\}$ at some point $(x_0,y_0)$ with $x_0>0$ and $y_0\in\real$. Using this phase space, one can readily prove that for any $\xi_0\in(0,\infty)$ fixed, there exists a unique profile with interface at $\xi=\xi_0$. The proof is identical to that of \cite[Proposition 3.3]{IS2} and is left to the reader. We are now in a position to prove our existence theorem.
\begin{proof}[Proof of Theorem \ref{th.exist}]
We divide the proof into two steps.

\medskip

\noindent \textbf{Step 1. Existence of one solution for $\sigma>0$ small.} It is well known that for $\sigma=0$ there exists one explicit good profile with interface and whose maximum only depends on $m$ \cite{S4}
\begin{equation}\label{prof.homog}
F_0(\xi)=\left[\frac{2m}{(m+1)(m-1)}\right]^{1/(m-1)}\left(\cos^2\left(\frac{(m-1)\xi}{2\sqrt{m}}\right)\right)_{+}^{1/(m-1)},
\end{equation}
and due to the invariance to translations when $\sigma=0$, any translation in space of it is also a solution to \eqref{SSODE}. In particular, with slight translations in space of $F_0(\xi)$ in both directions, one can get a connection in the phase space associated to the system \eqref{PSsyst1} for $\sigma=0$ having an interface and cutting the plane $\{z=0\}$ in a point $(x_0,y_0)$ with $y_0>0$ and another connection cutting the plane $\{z=0\}$ in a point $(x_1,y_1)$ with $y_1<0$. Working by now with the first connection, let $\epsilon>0$ be so small such that the two-dimensional ball $B((x_0,y_0),\epsilon)$ is contained in the region $\{x>0, \ y>0\}$ of the plane $\{z=0\}$. Consider the paraboloid of equation
\begin{equation}\label{parab}
(x-x_0)^2+(y-y_0)^2+z=\epsilon^2, \qquad z\geq0,
\end{equation}
whose normal direction (pointing upwards) is given by the vector $(2(x-x_0),2(y-y_0),1)$. Thus, the direction of the flow on the paraboloid \eqref{parab} is given by the sign of the expression
$$
E(x,y,z)=2m(m-1)(x-x_0)xy+2(y-y_0)(-my^2+\frac{m-1}{2}x-z^{\sigma}x^2)+mx.
$$
We notice that $|x-x_0|\leq\epsilon$ and $|y-y_0|<\epsilon$ on the paraboloid \eqref{parab}, thus we readily deduce that
$$
E(x,y,z)>K\epsilon+m(x_0-\epsilon), \qquad K\in\real,
$$
which is positive for $\epsilon>0$ sufficiently small. This proves that any orbit crossing the paraboloid \eqref{parab} must intersect the plane $\{z=0\}$ in a point which lies in the interior of the paraboloid, that is, in the two-dimensional ball $B((x_0,y_0),\epsilon)$ and in particular with coordinate $y>0$. By continuity with respect to the parameter $\sigma$, we infer that there exists $\sigma_{+}>0$ such that for any $\sigma\in(0,\sigma_+)$ there exists an orbit corresponding to profiles with interface crossing the paraboloid \eqref{parab} (and thus intersecting the plane $\{z=0\}$ at points with coordinate $y>0$). Considering the orbit for $\sigma=0$ that intersects the plane $\{z=0\}$ at the point $(x_1,y_1)$ with $y_1<0$ and performing an analogous analysis as above, we find that there exists $\sigma_{-}>0$ such that for any $\sigma\in(0,\sigma_{-})$, there is an orbit corresponding to profiles with interface that touches the plane $\{z=0\}$ at points with coordinate $y<0$. Letting $\sigma_0=\min\{\sigma_{+},\sigma_{-}\}$, it follows from the continuity of the slopes that for any $\sigma\in(0,\sigma_0)$ there exists a good profile with interface solution to \eqref{SSODE}.

\medskip

\noindent \textbf{Step 2. Existence of multiple solutions.} The proof of this step follows similar ideas to the previous one. Let $k>0$ be given. The main point is that, instead of using as starting point for translations and continuity argument the explicit profile $F_0(\xi)$ given by \eqref{prof.homog}, we consider a kind of "generalized profile" (using an abuse of language) constructed by \emph{concatenating $k$ copies of $F_0(\xi)$ one after another} (that is, the forward interface point of the $i$-th copy is the backward interface point of the $i+1$-th copy). This new object has $k$ zeros in the region $\xi>0$ and by slightly translating it in space to the left and to the right, we find such "concatenated generalized profiles" with $k$ zeros but such that they touch the vertical axis with positive, respectively negative derivatives. Since Proposition \ref{prop.interf} gives that for $\sigma>0$, the profiles we shoot from the same interface points as the two translated "generalized profiles" with $k$ zeros have to touch directly the plane $\{z=0\}$ (without passing through another zero point), we can repeat the argument with small balls as in Step 1 to conclude that there exists $\sigma_{0,k}>0$ sufficiently small such that for any $\sigma\in(0,\sigma_{0,k})$, there exists a good profile with interface to \eqref{SSODE} having exactly $k$ local maxima and $k$ local minima. Thus, for any given positive integer $k$, letting
$$
\sigma_k:=\min\{\sigma_{0,j}:1\leq j\leq k\}>0,
$$
we infer that for any $\sigma\in(0,\sigma_k)$, there exist at least $k$ different good profiles with interface, each of them having a different number of local maxima.
\end{proof}
\begin{figure}[ht!]
  \begin{center}
  \includegraphics[width=10cm,height=7.5cm]{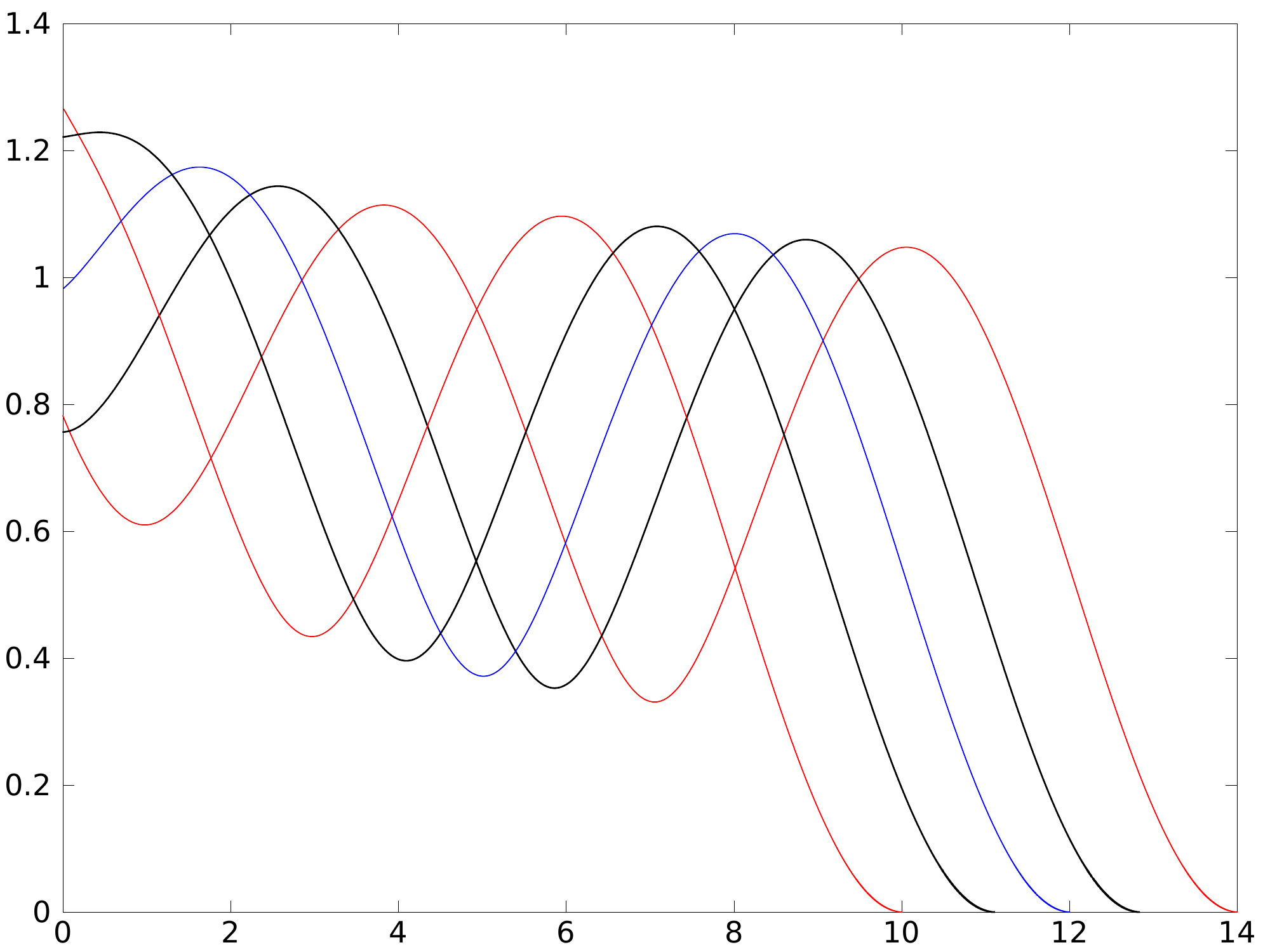}
  \end{center}
  \caption{Good and bad profiles with interface, with different slopes at $\xi=0$. Experiment for $m=2$, $p=2$ and $\sigma=0.1$}\label{fig4}
\end{figure}
Figure \ref{fig4} gives a visual representation of how the above proof works. We notice that the profiles with interface at $\xi_0=10$ and $\xi_0=14$ intersect the vertical axis with negative slope (that is, $f'(0)<0$ for such profiles), while the profile with interface at $\xi_0=12$ intersects the vertical axis with positive slope (that is, $f'(0)>0$ for this profiles). By continuity, between them there are two different good profiles with interface (whose interfaces in our experiment are approximately at $\xi_0=11.1$, respectively $\xi_0=12.83$). This is exactly the technique used in the proof of Theorem \ref{th.exist}.

\noindent \textbf{Remark.} Numerical evidences show that there should exist two different good profiles with interface for any fixed number of local maxima given, as for example the two good profiles with interface represented in Figure \ref{fig4}.

\section{Non-existence of good profiles with interface for large $\sigma$}\label{sec.nonexist}

As explained in the Introduction, for $\sigma>0$ large there is a striking difference: we pass from the existence of multiple solutions to the non-existence of any. This section is devoted to the proof of this result. The main idea in the proof is again geometric: we consider the hyperbola given by the graph of the function
\begin{equation}\label{hyp1}
f(\xi)=\left(\frac{1}{m(m-1)}\right)^{1/(m-1)}\xi^{-\sigma/(m-1)},
\end{equation}
and in a first step we shoot forward from the vertical axis with $f(0)=a>0$, $f'(0)=0$ to show that the first intersection point of the graph of the profile $f$ with the hyperbola \eqref{hyp1} is bounded from below. In a second step, we show that shooting backward from the interface point for $\sigma>0$ large enough, the last (in a backward sense) intersection point of the profiles with interfaces with the hyperbola \eqref{hyp1} goes below the bound found in the first step when shooting with profiles from the vertical axis. Thus the two shooting processes cannot meet and give rise to a full good profile with interface, provided $\sigma$ is sufficiently large.

\subsection{A monotonicity result}\label{subsec.monot}

In order to perform the program described in a few words above, we need a preliminary fact about the equation. Recalling the hyperbola \eqref{hyp}, it is immediate to deduce from \eqref{SSODE} that for any profile $f$ solution to \eqref{SSODE} for $\xi\in(0,\infty)$, any local maximum $\xi_0\in(0,\infty)$ of $f$ fulfills
$$
\xi_0^{\sigma}f^m(\xi_0)\geq\frac{1}{m-1}f(\xi_0),
$$
hence it lies above the hyperbola \eqref{hyp} and thus also above the hyperbola \eqref{hyp1}. We next show that the graphs of two profiles $f_1$, $f_2$ which are ordered at $\xi=0$ either with respect to the values of $f_{i}(0)$ or with respect to the values of $f_{i}'(0)$, remain ordered before crossing the hyperbola \eqref{hyp1} for the first time. More precisely
\begin{lemma}\label{lem.monot}
Let $\sigma>0$ and $f_1$, $f_2$ be two profiles such that one of the following two conditions is fulfilled:

(a) $f_1(0)=f_2(0)=a>0$ and $f_2'(0)>f_1'(0)=0$

(b) $f_2(0)>f_1(0)>0$ and $f_1'(0)=f_2'(0)=0$.

Then the graphs of $f_1$ and $f_2$ cannot intersect before they both have intersected the graph of the hyperbola \eqref{hyp1}.
\end{lemma}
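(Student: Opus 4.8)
The plan is to argue by comparison, following the sign of the single auxiliary quantity $g:=f_2^m-f_1^m$. In both cases (a) and (b) one checks immediately that $f_2>f_1$ in a right neighbourhood of $\xi=0$, so it suffices to show that this ordering cannot be lost while the larger profile $f_2$ stays below the hyperbola \eqref{hyp1}. Once $f_2$ reaches \eqref{hyp1} the two graphs are momentarily separated by it (the larger one above, the smaller one still below), so they cannot coincide until the smaller profile has also crossed; hence the whole content of the lemma reduces to a convexity property of $g$ on the region below \eqref{hyp1}.

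Subtracting the two copies of \eqref{SSODE} gives
\[
g''=\frac{1}{m-1}(f_2-f_1)-\xi^{\sigma}\,(f_2^m-f_1^m).
\]
I would then write $f_2-f_1=(f_2^m)^{1/m}-(f_1^m)^{1/m}$ and apply the mean value theorem to $t\mapsto t^{1/m}$, which produces $f_2-f_1=\tfrac{1}{m}\theta^{(1-m)/m}g$ for some $\theta$ lying between $f_1^m$ and $f_2^m$. This recasts the difference equation in the form
\[
g''=c(\xi)\,g,\qquad c(\xi)=\frac{1}{m(m-1)}\theta^{(1-m)/m}-\xi^{\sigma}.
\]

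The crux is the sign of $c$. Since $\theta<f_2^m$ and the exponent $(1-m)/m$ is negative, one has $\theta^{(1-m)/m}>f_2^{1-m}$; on the other hand, $f_2$ lying below \eqref{hyp1} is exactly the statement $\xi^{\sigma}<\frac{1}{m(m-1)}f_2^{1-m}$. Combining these two facts yields $\xi^{\sigma}<\frac{1}{m(m-1)}\theta^{(1-m)/m}$, i.e. $c(\xi)>0$, precisely as long as the \emph{larger} profile $f_2$ is below the hyperbola (it is essential to bound $\theta$ from above by $f_2^m$; the smaller profile would give the wrong inequality). Feeding in the initial data now finishes the ordering: in case (a) we get $g(0)=a^m-a^m=0$ and $g'(0)=m a^{m-1}\bigl(f_2'(0)-f_1'(0)\bigr)>0$, while in case (b) we get $g(0)=f_2(0)^m-f_1(0)^m>0$ and $g'(0)=0$; in both situations $g(0),g'(0)\ge0$ and they do not vanish together. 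Arguing by contradiction, if $\xi_*$ were the first zero of $g$ before $f_2$ meets \eqref{hyp1}, then on $(0,\xi_*)$ we would have $g>0$ and $f_2$ below the hyperbola, hence $c>0$, $g''>0$, so $g$ is convex and $g'\ge g'(0)\ge0$ there; this forces $g(\xi_*)\ge g(0)\ge0$ with a strict inequality in each case, contradicting $g(\xi_*)=0$.

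I expect the main obstacle to be the bookkeeping of where each profile sits relative to \eqref{hyp1}, rather than the algebra: the convexity mechanism only runs while $f_2\le H$, so to reach the stated conclusion one must combine it with the separation remark and argue that, because the profiles are pulled apart by the hyperbola immediately after $f_2$ crosses it, no intersection can slip in during the interval between the two crossings. The passage to $g''=c(\xi)g$ via the mean value substitution and the verification $c>0$ are routine once one notices that it is the larger profile whose position relative to \eqref{hyp1} controls the sign.
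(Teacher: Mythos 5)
Your core argument is correct and takes a genuinely different route from the paper. The paper treats the two cases with two separate devices: for (a), an integral identity obtained by multiplying $g''=\frac{1}{m-1}g^{1/m}-\xi^{\sigma}g$ by $g'$ and integrating, which shows that at a first intersection with both functions still increasing one would have $g_2'(\xi_0)^2-g_1'(\xi_0)^2>0$, contradicting $g_2'(\xi_0)\le g_1'(\xi_0)$; hence one profile has already passed a local maximum, and local maxima lie above \eqref{hyp}, thus above \eqref{hyp1}. For (b), the paper locates the first crossing $\xi_1$ of the \emph{second} derivatives and studies the variation of $\phi(x)=\frac{1}{m-1}x^{1/m}-\xi_1^{\sigma}x$, concluding that $f_2(\xi_1)$ lies above and $f_1(\xi_1)$ below the hyperbola \eqref{hyp1}. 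Your single mechanism---set $g=f_2^m-f_1^m$, linearize via the mean value theorem to get $g''=c(\xi)g$, and check $c>0$ while the \emph{larger} profile stays below \eqref{hyp1}---handles both sets of initial data uniformly, and it isolates why \eqref{hyp1} (with the extra factor $1/m$ compared with \eqref{hyp}) is the correct threshold: the $1/m$ is exactly the derivative factor of $t\mapsto t^{1/m}$ appearing in the mean value step. A small polish: you can avoid any fuss about the selection $\theta(\xi)$ by using the gradient inequality for the concave map $t\mapsto t^{1/m}$, namely $f_2-f_1\ge\frac{1}{m}f_2^{1-m}\,g$ whenever $g\ge 0$, which yields directly the differential inequality $g''\ge\left[\frac{1}{m(m-1)}f_2^{1-m}-\xi^{\sigma}\right]g>0$ on the relevant region; the convexity contradiction then runs exactly as you wrote it.

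On the residual point you flag yourself: your mechanism literally proves that the graphs cannot intersect before $f_2$ first meets \eqref{hyp1}, and your ``separation remark'' does not by itself close the window in which $f_2$ has dipped back below the hyperbola while $f_1$ has not yet crossed (the convexity restarts at the re-entry point with no sign control on $g'$ there). But this is not a deficiency relative to the paper: the paper's own proof establishes the same partial conclusion (in case (a), when it is $f_2$ whose derivative is negative at $\xi_0$, only $f_2$ is shown to have crossed; in case (b) only the crossing of $f_2$ before $\xi_1$ is obtained), and what is actually invoked in Step 1 of the proof of Theorem \ref{th.nonexist} is precisely your version: since no intersection occurs before the first crossing of $f_2$, at that crossing $f_1$ lies strictly below \eqref{hyp1}, so the first crossing of $f_1$ occurs at a larger $\xi$, i.e., lower on the hyperbola. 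So your proof is sufficient for the use made of the lemma, and arguably cleaner.
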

\begin{proof}
We divide again the proof into several steps in order to ease its presentation.

\medskip

\noindent \textbf{Step 1. An integral identity for solutions.} Let $f(\xi)$ be a generic solution to \eqref{SSODE} and define $g(\xi)=f(\xi)^m$, which solves
\begin{equation}\label{interm17}
g''(\xi)=\frac{1}{m-1}g(\xi)^{1/m}-\xi^{\sigma}g(\xi).
\end{equation}
By multiplying with $g'(\xi)$ in \eqref{interm17} and then integrating on an interval $(0,\xi_0)$ for any $\xi_0>0$ fixed, we obtain
\begin{equation}\label{int.ident}
\begin{split}
(g')^{2}(\xi_0)&=(g')^2(0)+\frac{2m}{(m+1)(m-1)}\left[g(\xi_0)^{(m+1)/m}-g(0)^{(m+1)/m}\right]\\
&-\xi_0^{\sigma}g(\xi_0)^2+\sigma\int_0^{\xi_0}\xi^{\sigma-1}g(\xi)^2\,d\xi.
\end{split}
\end{equation}

\medskip

\noindent \textbf{Step 2. Monotonicity when condition (a) is fulfilled.} Defining $g_1=f_1^m$ and $g_2=f_2^m$, condition (a) implies that $g_1(0)=g_2(0)$ and $g_2'(0)>0=g_1'(0)$. We infer that in a right-neighborhood of $\xi=0$, $g_2(\xi)>g_1(\xi)$. Let $\xi_0>0$ be the first point of intersection of the graphs of $g_1$ and $g_2$, that is $g_1(\xi)<g_2(\xi)$ for $\xi\in(0,\xi_0)$ and $g_1(\xi_0)=g_2(\xi_0)$. Assume for contradiction that at $\xi=\xi_0$ both functions are still increasing, thus $g_1'(\xi_0)>0$, $g_2'(\xi_0)>0$ and $g_2'(\xi_0)\leq g_1'(\xi_0)$. It follows from the identity \eqref{int.ident} that
$$
g_2'(\xi_0)^2-g_1'(\xi_0)^2=g_2'(0)^2+\sigma\int_0^{\xi_0}\xi^{\sigma-1}(g_2(\xi)^2-g_1(\xi)^2)\,d\xi>0,
$$
which is a contradiction. Thus, either $g_1'(\xi_0)<0$ or $g_2'(\xi_0)<0$, and the corresponding function whose derivative is negative at $\xi=\xi_0$ attained its first local maxima at some smaller $\xi_M<\xi_0$. This proves that the graph of this function crossed first the hyperbola \eqref{hyp1} before intersecting the graph of the other function, as claimed.

\medskip

\noindent \textbf{Step 3. Monotonicity when condition (b) is fulfilled.} With the same notation as in Step 2, we first notice from \eqref{interm17} that
$$
(g_2-g_1)''(0)=\frac{1}{m-1}\left[g_2(0)^{1/m}-g_1(0)^{1/m}\right]>0,
$$
thus $g_2''(\xi)>g_1''(\xi)$ in a right-neighborhood of $\xi=0$. Let $\xi_0$ be the first intersection point of $g_2$ and $g_1$ as in Step 2, and $\xi_1$ be the first intersection point of their second derivatives $g_1''$ and $g_2''$, that is, $g_1''(\xi)<g_2''(\xi)$ for $\xi\in(0,\xi_1)$ and $g_1''(\xi_1)=g_2''(\xi_1)$. Since $g_1(\xi_0)=g_2(\xi_0)$, $g_1(\xi)<g_2(\xi)$ for $\xi\in(0,\xi_0)$ and $g_2'(0)=0=g_1'(0)$, which implies that $g_1'(\xi)<g_2'(\xi)$ for $\xi\in(0,\xi_1)$, it necessarily follows that $\xi_1\in(0,\xi_0)$ and $g_2(\xi_1)>g_1(\xi_1)$. Introducing the function
$$
\phi(x):=\frac{1}{m-1}x^{1/m}-\xi_1^{\sigma}x,
$$
it follows that $\phi(g_2(\xi_1))=\phi(g_1(\xi_1))$. Studying the variation of $\phi$, we notice that it has a maximum at
$$
x_0:=(m(m-1))^{-m/(m-1)}\xi_1^{-m\sigma/(m-1)}
$$
and it is increasing for $x\in(0,x_0)$ and decreasing for $x\in(x_0,\infty)$. It follows that $g_2(\xi_1)>x_0>g_1(\xi_1)$, hence
\begin{equation}\label{interm18}
f_1(\xi_1)<x_0^{1/m}=(m(m-1))^{-1/(m-1)}\xi_1^{-\sigma/(m-1)}<f_2(\xi_1).
\end{equation}
In particular, we infer from the second inequality in \eqref{interm18} that the point $(\xi_1,f_2(\xi_1))$ lies on the graph of $f_2$ after the first intersection with the hyperbola \eqref{hyp1}. Since $\xi_0>\xi_1$ the conclusion follows.
\end{proof}

\subsection{Non-existence}\label{subsec.nonexist}

Having in our hands the preparatory Lemma \ref{lem.monot}, we are now ready to prove Theorem \ref{th.nonexist}. Let us first notice that the hyperbola \eqref{hyp1} converts in the phase space associated to the system \eqref{PSsyst2} into the line of equation $(m-1)Y+\sigma X=0$ inside the plane $\{Z=1/m\}$.
\begin{proof}[Proof of Theorem \ref{th.nonexist}]
\noindent \textbf{Step 1. Shooting from the vertical axis.} Consider the profile $h(\xi)$ contained in unique orbit going out of $P_2$ into the phase space associated to the system \eqref{PSsyst2}, according to Lemma \ref{lem.P2}. We readily infer from Lemma \ref{lem.monot} that for any good profile $f$ such that $f(0)=a>0$, $f'(0)=0$ (and even with positive slope at the origin $f'(0)>0$), the first intersection point of the graph of $f(\xi)$ with the hyperbola \eqref{hyp1} lies "above" (in the geometric sense on the hyperbola) the first intersection point of the profile $h(\xi)$ with the same hyperbola. Thus, we want to estimate the coordinate of this latter intersection point. Since by Lemma \ref{lem.P0P2global} the orbit going out of $P_2$ stays outside the cylinder of equation \eqref{cylinder}, we infer that along this orbit
$$
Y^2\geq\frac{2}{m+1}-\frac{1}{m}Z,
$$
hence, recalling that we are dealing only with the region $\{Z<1/m\}$, that is, before the first intersection with the hyperbola \eqref{hyp1}, we get
\begin{equation*}
\begin{split}
\frac{2\sqrt{m(m-1)}}{m-1}(h^{(m-1)/2})'(\xi)&\geq\sqrt{\frac{2}{m+1}-\frac{Z}{m}}\geq\sqrt{\frac{2}{m+1}-\frac{1}{m^2}}\\
&=\sqrt{\frac{(m-1)(2m+1)}{m^2(m+1)}},
\end{split}
\end{equation*}
whence by integration on $(0,\xi)$ and taking into account that $h(0)=0$,
\begin{equation}\label{interm19}
h(\xi)\geq\left[\frac{m-1}{2m}\sqrt{\frac{2m+1}{m(m+1)}}\xi\right]^{2/(m-1)}.
\end{equation}
We easily deduce from \eqref{interm19} that the first intersection of the graph of $h(\xi)$ with the hyperbola \eqref{hyp1} occurs at a coordinate $\xi\leq\xi_{+}$, where $\xi_{+}$ satisfies the equality
$$
\left[\frac{m-1}{2m}\sqrt{\frac{2m+1}{m(m+1)}}\xi_+\right]^{2/(m-1)}=\left(\frac{1}{m(m-1)}\right)^{1/(m-1)}\xi_+^{-\sigma/(m-1)},
$$
that is,
\begin{equation}\label{interm20}
\xi_+=\left[\frac{4m^2(m+1)}{(2m+1)(m-1)^3}\right]^{1/(\sigma+2)}.
\end{equation}
In conclusion, \eqref{interm20} and Lemma \ref{lem.monot} show that any good profile (with or without interface) intersects for the first time the hyperbola \eqref{hyp1} at a point $\xi\leq\xi_+$.

\medskip

\noindent \textbf{Step 2. Shooting backward from the interface point.} Recall that the hyperbola \eqref{hyp1} is mapped in the phase space into the line of equations $\{(m-1)Y+\sigma X=0, \ Z=1/m\}$. Noticing that the direction of the flow on the plane $\{Z=1/m\}$ is given by the sign of the expression $(m-1)Y+\sigma X$, we infer that for
$$
X\geq x_0:=\frac{(m-1)h_0}{\sigma},
$$
the above flow has positive sign at points lying inside the cylinder, that is with $|Y|\leq h_0$. As we regard the orbits backward from the interface point, we deduce that the last intersection of an orbit with interface with the hyperbola \eqref{hyp1} (that is, the closest intersection to the vertical axis) occurs with $X\leq x_0$, that is,
$$
f(\xi)\leq\left[\frac{(m-1)h_0}{\sigma}\xi\right]^{2/(m-1)}.
$$
Thus, this last (in backward sense) intersection point with the hyperbola \eqref{hyp1} of a profile with interface occurs at a coordinate $\xi\geq\xi_{-}$, where $\xi_{-}$ satisfies the equality
$$
\left[\frac{(m-1)h_0}{\sigma}\xi_{-}\right]^{2/(m-1)}=\left(\frac{1}{m(m-1)}\right)^{1/(m-1)}\xi_-^{-\sigma/(m-1)},
$$
that is,
\begin{equation}\label{interm21}
\xi_{-}=\left[\frac{(m+1)\sigma^2}{2m(m-1)^3}\right]^{1/(\sigma+2)}
\end{equation}
Gathering \eqref{interm20} and \eqref{interm21}, we notice that for any $\sigma>0$ such that $\sigma^2>8m^3/(2m+1)$, we have
$$
\frac{(m+1)\sigma^2}{2m(m-1)^3}>\frac{4m^2(m+1)}{(2m+1)(m-1)^3},
$$
whence $\xi_{-}>\xi_+$ and the two shooting processes cannot join at the same point in order to form a complete good profile with interface. We thus conclude that at least for $\sigma>2m\sqrt{2m/(2m+1)}$ there is no good profile with interface, ending the proof.
\end{proof}

\noindent \textbf{Remarks.} 1. The above proof gives us a quantitative estimate for $\sigma$ such that non-existence of good profiles with interface occurs. However, numerical experiments seem to show that the inferior limit of non-existence for $\sigma$ is far below $2m\sqrt{2m/(2m+1)}$ (for example in Figure \ref{fig2} we perform the experiment for $m=2$ with $\sigma=1/2$) as the fact that a good connection between the vertical axis and an interface point is theoretically possible to exist does not necessarily mean that it really exists.

\medskip

\noindent 2. The proof of Theorem \ref{th.nonexist} together with the monotonicity Lemma \ref{lem.monot} prove that in fact all the profiles with interface for $\sigma>2m\sqrt{2m/(2m+1)}$ cut the vertical axis with $f'(0)<0$. Let us recall here the profiles plotted in Figure \ref{fig2} which confirm this behavior. It also appears from the numerical experiment in Figure \ref{fig2} that the profiles with interface seem to have their intersections with the vertical axis concentrated in a small region of this axis. We have no rigorous proof yet for such a statement.

\section*{Acknowledgements} R. I. is partially supported by the ERC Starting Grant GEOFLUIDS 633152. A. S. is partially supported by the Spanish project MTM2017-87596-P.

\bibliographystyle{plain}

\begin{thebibliography}{1}

\bibitem{AT05}
D. Andreucci, and A. F. Tedeev, \emph{Universal bounds at the
blow-up time for nonlinear parabolic equations}, Adv. Differential
Equations, \textbf{10} (2005), no. 1, 89-120.


\bibitem{BZZ11}
X. Bai, S. Zhou, and S. Zheng, \emph{Cauchy problem for fast
diffusion equation with localized reaction}, Nonlinear Anal.,
\textbf{74} (2011), no. 7, 2508-2514.

\bibitem{BL89}
C. Bandle, and H. Levine, \emph{On the existence and nonexistence of
global solutions of reaction-diffusion equations in sectorial
domains}, Trans. Amer. Math. Soc., \textbf{316} (1989), 595-622.

\bibitem{BK87}
P. Baras, and R. Kersner, \emph{Local and global solvability of a
class of semilinear parabolic equations}, J. Differential Equations,
\textbf{68} (1987), 238-252.

\bibitem{FdP18}
R. Ferreira, and A. de Pablo, \emph{Grow-up for a quasilinear heat equation with a localized reaction in higher dimensions}, Rev. Mat. Complut., \textbf{31} (2018), no. 3, 805-832.

\bibitem{FdPV06}
R. Ferreira, A. de Pablo, and J. L. V\'azquez, \emph{Classification
of blow-up with nonlinear diffusion and localized reaction}, J.
Differential Equations, \textbf{231} (2006), no. 1, 195-211.

\bibitem{GV97}
V. A. Galaktionov, and J. L. V\'azquez, \emph{Continuation of blowup
solutions of nonlinear heat equations in several space dimensions},
Comm. Pure Appl. Math, \textbf{50} (1997), no. 1, 1-67.

\bibitem{GP76}
B. H. Gilding, and L. A. Peletier, \emph{On a class of similarity solutions of the porous media equation}, J. Math. Anal. Appl., \textbf{55} (1976), 351-364.

\bibitem{GLS}
J.-S. Guo, C.-S. Lin, and M. Shimojo, \emph{Blow-up behavior for a
parabolic equation with spatially dependent coefficient}, Dynam.
Systems Appl., \textbf{19} (2010), no. 3-4, 415-433.

\bibitem{GS11}
J.-S. Guo, and M. Shimojo, \emph{Blowing up at zero points of
potential for an initial boundary value problem}, Commun. Pure Appl.
Anal., \textbf{10} (2011), no. 1, 161-177.

\bibitem{GLS13}
J.-S. Guo, C.-S. Lin, and M. Shimojo, \emph{Blow-up for a
reaction-diffusion equation with variable coefficient}, Appl. Math.
Lett., \textbf{26} (2013), no. 1, 150-153.

\bibitem{IS1}
R. Iagar, and A. S\'anchez, \emph{Blow up profiles for a quasilinear reaction-diffusion equation with weighted reaction and linear growth}, J. Dynamics Differential Equations, to appear (online January 2019, DOI 10.1007/s10884-018-09727-w).

\bibitem{IS2}
R. Iagar, and A. S\'anchez, \emph{Blow up profiles for a quasilinear reaction-diffusion equation with weighted reaction}, Submitted (January 2019), Preprint ArXiv no. 1811.10330.

\bibitem{KWZ11}
X. Kang, W. Wang, and X. Zhou, \emph{Classification of solutions of
porous medium equation with localized reaction in higher space
dimensions}, Differential Integral Equations, \textbf{24} (2011),
no. 9-10, 909-922.

\bibitem{KuBook}
Y. Kuznetsov, \emph{Elements of Applied Bifurcation Theory. Third edition}, Springer Verlag, New York, 2004.

\bibitem{Liang12}
Z. Liang, \emph{On the critical exponents for porous medium equation
with a localized reaction in high dimensions}, Commun. Pure Appl.
Anal., \textbf{11} (2012), no. 2, 649-658.

\bibitem{Pe}
L. Perko, \emph{Differential equations and dynamical systems. Third
edition}, Texts in Applied Mathematics, \textbf{7}, Springer Verlag,
New York, 2001.

\bibitem{Pi97}
R. G. Pinsky, \emph{Existence and nonexistence of global solutions
for $u_t=\Delta u+a(x)u^p$ in $\real^d$}, J. Differential Equations,
\textbf{133} (1997), no. 1, 152-177.

\bibitem{Pi98}
R. G. Pinsky, \emph{The behavior of the life span for solutions to
$u_t=\Delta u+a(x)u^p$ in $\real^d$}, J. Differential Equations,
\textbf{147} (1998), no. 1, 30-57.

\bibitem{QV95}
F. Quir\'os, J. L. V\'azquez, \emph{Self-similar turbulent bursts: existence and analytic dependence}, Differential Integral Equations, \textbf{8} (1995), no. 7, 1677-1708.

\bibitem{QS}
P. Quittner, and Ph. Souplet, \emph{Superlinear parabolic problems.
Blow-up, global existence and steady states}, Birkhauser Advanced
Texts, Birkhauser Verlag, Basel, 2007.

\bibitem{S4}
A. A. Samarskii, V. A. Galaktionov, S. P. Kurdyumov, and A. P.
Mikhailov, \emph{Blow-up in quasilinear parabolic problems}, de
Gruyter Expositions in Mathematics, \textbf{19}, W. de Gruyter,
Berlin, 1995.

\bibitem{Su02}
R. Suzuki, \emph{Existence and nonexistence of global solutions of
quasilinear parabolic equations}, J. Math. Soc. Japan, \textbf{54}
(2002), no. 4, 747-792.

\bibitem{Wig}
S. Wiggins, \emph{Introduction to Applied Nonlinear Dynamical Systems and Chaos}, Second Edition, Springer Science and Business Media, 2006.

\end{thebibliography}

\end{document}